\newtheorem{theorem}{Theorem}[section]
\newtheorem{lemma}[theorem]{Lemma}
\newtheorem{proposition}[theorem]{Proposition}
\newtheorem{remark}[theorem]{Remark}
\DeclareMathOperator{\cosec}{cosec}
\begin{document}

\preprint{APS/123-QED}

\title{Linear and nonlinear stability of periodic orbits in annular billiards}% Force line breaks with \\
%\thanks{A footnote to the article title}%

\author{Carl P. Dettmann}
 %\altaffiliation{University of Bristol, School of Mathematics, University Walk, Bristol BS8 1TW, UK}%Lines break automatically or can be forced with \\
 \email{Carl.Dettmann@bris.ac.uk}
\author{Vitaly Fain}%
 \email{vf13950@bristol.ac.uk}
\affiliation{%
 University of Bristol, School of Mathematics,
 University Walk,\\ Bristol BS8 1TW, UK 
}%

%\collaboration{MUSO Collaboration}%\noaffiliation

%\author{Charlie Author}
 %\homepage{http://www.Second.institution.edu/~Charlie.Author}

%\collaboration{CLEO Collaboration}%\noaffiliation

\date{\today}% It is always \today, today,
             %  but any date may be explicitly specified

\begin{abstract}
An annular billiard is a dynamical system in which a particle moves freely in a disk except for elastic collisions with the boundary, and also a circular scatterer in the interior of the disk. We investigate stability properties of some periodic orbits in annular billiards in which the scatterer is touching or close to the boundary. We analytically show that there exist linearly stable periodic orbits of arbitrary period for scatterers with decreasing radii that are located near the boundary of the disk. As the position of the scatterer moves away from a symmetry line of a periodic orbit, the stability of periodic orbits changes from elliptic to hyperbolic, corresponding to a saddle-center bifurcation. When the scatterer is tangent to the boundary, the periodic orbit is parabolic. We prove that slightly changing the reflection angle of the orbit in the tangential situation leads to the existence of KAM islands. Thus we show that there exists a decreasing to zero sequence of open intervals of scatterer radii, along which the billiard table is not ergodic.
\end{abstract}

%\pacs{Valid PACS appear here}% PACS, the Physics and Astronomy
                             % Classification Scheme.
%\keywords{Suggested keywords}%Use showkeys class option if keyword
                              %display desired
\maketitle

%\tableofcontents

\begin{quotation}A billiard is a dynamical system where a point particle moves with constant velocity inside a domain and experiences elastic collisions with the boundary of the domain. The shape of the boundary determines the dynamics of the billiard. Billiards in a disk on a plane are completely integrable, while annular billiard tables consisting of a particle confined between two nonconcentric disks generically display mixed phase space due to a family of regular orbits that never touch the scatterer. Billiard models find applications in a variety of problems in statistical \cite{dettmann2014diffusion}, classical and quantum \cite{haake2013quantum} physics. In this paper, we consider annular billiard tables formed of a small circular scatterer placed in the interior of a unit circle; this is a popular geometry for microwave billiard experiments \cite{bittner2014scattering}. Circular boundaries allow us to analytically examine linear and nonlinear stability of some periodic orbits. Depending on the parameters of the problem, we find that there exist linearly stable orbits of arbitrarily large period. We show the existence of a saddle-center bifurcation as the parameters vary, corresponding to a change of stability from linearly elliptic to saddle type. Placing the scatterer tangentially to the external circle creates a cusp that is a source of singularities in the billiard. We use KAM theory to establish that in the cusp case, the periodic orbits are nonlinearly stable.
\end{quotation}

\section{\label{sec:level1}Introduction and main results}

Billiards are dynamical systems modelling the motion of a classical particle moving with constant speed inside a bounded domain and performing elastic collisions with the boundary of the domain. They display a whole spectrum of dynamical behaviour ranging from completely integrable to chaotic. The mathematical study of billiards was initiated by Birkhoff \cite{Birkhoff:1927p17467}, and later significantly extended by Sinai \cite{sinai1970dynamical} and his followers. Billiards arise in models for various physical phenomena, for example in statistical mechanics models of hard balls due to Boltzmann \cite{sinai1979development}. A  billiard in a plane consists of a classical point particle moving with constant velocity in a bounded domain $Q \subset \mathbb{R}^{2}$, called the billiard table, and obeying the optical reflection law upon collisions with the boundary of the billiard table $\partial Q$. The shape of the boundary determines the dynamics of the billiard.

It was proved by Birkhoff \cite{Birkhoff:1927p17467} that elliptic billiard tables are integrable. A long standing Birkhoff's conjecture, in fact, states that elliptic billiards are the only types of completely integrable strictly convex tables. Recently it was shown by Avila, Kaloshin and De Simoi \cite{avila2014integrable} that this conjecture is true for small perturbations of elliptic billiards with small eccentricity.  By using KAM theory, Lazutkin \cite{lazutkin1973existence} proved that existence of a continuum set of caustics near the boundary of strictly convex $C^{553}$ boundaries, thus preventing ergodicity. Douady \cite{douady1982application} refined this result to $C^{6}$ boundaries.  On the other hand, it was shown by Sinai \cite{sinai1970dynamical} that concave billiard tables are ergodic and hyperbolic, while later Bunimovich \cite{bunimovich1974ergodic}, by using the defocusing mechanism, showed that certain piecewise smooth convex table (i.e. the stadium) are also hyperbolic and ergodic. It has been also recently conjectured by Bunimovich and Grigo \cite{grigobilliards, bunimovich2010focusing} that the presence of absolute focusing components is a requirement for ergodicity.

As noted in Foltin \cite{foltin2002billiards} the method of defocusing requires the circular arcs of the boundary to be disjoint, and thus does not apply to strictly convex billiard tables with inner scatterers.  It was shown by Foltin \cite{foltin2002billiards} and independently by Chen \cite{chen2010topological}  that the generically, strictly convex $C^{2}$ tables with small inner scatterers admit positive topological entropy.

In the class of convex billiards with scatterers, perhaps the simplest geometry is that of annular billiard, that is, a circle billiard with a smaller inner scatterer. There appears to be a lack of published mathematically rigorous studies of billiards of this type.  Analytical and numerical methods were used to catalogue some symmetric periodic orbits up to order $6$ in annular billiards in the work by Gouesbet et al \cite{gouesbet2001periodic}, while coexistence of KAM islands and chaotic motions in annular billiards were studied numerically in Saito et al\cite{saito1982numerical}. Recently, the related work of Correia and Zhang \cite{correia2015stability} demonstrated the existence of stability of some periodic orbits in so-called \textit{moon} billiards, and ergodicity of certain other tables in that class. Linear stability and bifurcations of some periodic orbits in oval and elliptic billiards with an inner scatterer were investigated by  da Costa et al  \cite{da2015dynamics}. Marginally unstable periodic orbits and relation to quantum chaos has been investigated by Altmann et al \cite{altmann2008prevalence}.

In this work we show that there exist certain linearly stable periodic orbits of arbitrarily large period in a circle billiard with a small interior scatterer. Furthermore we prove that in the case of the scatterer being tangential to the outer circle, the periodic orbits can be made to be nonlinearly (KAM) stable.  

Take a unit disk $D$ in the plane with boundary $\partial D$. The billiard in $D$ is completely integrable \cite{chernov2006chaotic}. For every positive integer $n \geq 3$, the billiard trajectory with the angle of reflection $\theta = \frac{\pi}{n}$ made with the positively oriented tangent to $\partial D$ is $n$-periodic, tracing an $n$-sided regular polygon inscribed in $D$. The billiard trajectory with reflection angle $\theta = \frac{k\pi}{n}$ where $k$ is an integer,  $1 \leq k \leq \frac{n}{2}$, is also $n$-periodic but traces a $n$-pointed star polygon inscribed in $D$ if $(k,n)$ are coprime. Let us fix $n$ and $k$. We obtain the annular billiard table by placing an inner circular scatterer $D_{R}$, of small radius $R \ll 1$ in the interior of $D$, centered on the middle of one of the sides of the polygon. Thus $D_{R}$ is normal to the billiard path.  Let the boundary of $D_{R}$ be $\partial D_{R}$. Since the circle and the $n$-polygon is rotationally symmetric, it makes no difference on which side of the polygon $D_{R}$ is located. It is possible to perturb this configuration in two ways. One may vary $R$ up to some maximum \textit{admissible} value (to be specified in Section~\ref{sec:level3}) to ensure that $D_{R}$ is in interior of $D$, and in the case of star polygonal orbits, to avoid other sides of the same polygon.  Another perturbation would be to make small displacements $\delta \geq 0$ of $D_{R}$ along the side of the polygon away from the centre of the initial position of $D_{R}$, as long as $D_{R}$ stays in the interior of $D$. Therefore, the maximum value of $R$ depends on $n$, $k$ and $\delta$, and we suppress this dependence for clarity of presentation. We will call the corresponding annular billiard table $Q(R)$.

With the scatterer located as described above, we obtain a $(2n+2)$-period orbit, we call it a type (a) orbit (see Fig.~\ref{fig1}.), in the following way. The billiard will undergo $n$ consecutive collisions with $\partial D$, with the initial angle of reflection made with $\partial D$ chosen to be $\theta_{i} =  \frac{k \pi}{n}$ for $i \in \lbrace 0,...,n-1 \rbrace $. Suppose $D_{R}$ is located on the straight line billiard trajectory segment joining the $n$-th and $(n+1)$-th collision points. Then $(n+1)$-th collision is perpendicularly on $\partial D_{R}$. After collision with $\partial D_{R}$ the particle reverses its path, and the $(n+2)$-th collision is again with $\partial D$. The particle now performs $(n-1)$ collisions on $\partial D$ again, before colliding with $\partial D_{R}$ perpendicularly, and bouncing back to form a closed orbit of length $(2n+2)$. For the reversed direction of the trajectory we have $\theta_{i} = \pi - \frac{k\pi}{n}$ for  $ i \in \lbrace n+1,..., 2n \rbrace$. Let $\gamma_{a,k}$ denote the type (a) orbit corresponding to  fixed $k \geq 1 $ for a given $n$. We suppress the dependence of the orbit $\gamma_{a,k}$ on the parameters $n$, $R$ and $\delta$.
 
For case $k=1$ and $\delta=0$, one may take a certain maximum $R$ such that $\partial D_{R}$ is \textit{tangent} to $\partial D$ (thus forming a \textit{cusp}). It is known that cusps can be a source of singularities in billiards \cite{chernov2007dispersing}. At the present time, cusps created by one focusing and one dispering boundaries have not received much attention in the literature except in the recent work \cite{da2015dynamics}. Prior to that publication, studies were limited to the situation with two dispersing or one dispersing and one flat wall \cite{balint2011limit}, \cite{chernov2007dispersing}, \cite{balint2008decay}. In the cusp case, $R$ depends on $n$ only and we obtain a one-parameter family of $(2n+2)$-periodic orbits for a annular cusp billiard.

\begin{figure} [b]
\centering
\includegraphics[width=8 cm]{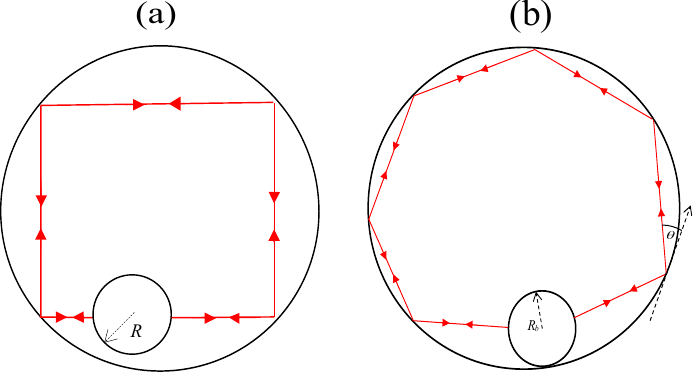}
\vspace*{-1mm}
\caption[]{Type (a) orbit with $\theta = \frac{k\pi}{n}$ with $k=1$, $n=4$, $\delta>0$, and a type (b) orbit with $\theta = \frac{\pi}{n} + \epsilon$; $R_{b}$ as in eq. \protect (\ref{radiusb}) below}
\label{fig1}
\end{figure}

We have the following theorem concerning the linear stability of periodic orbits for type (a).
 
\begin{theorem}
For any given $n \geq 3$ there exists a billiard table $Q(R)$ such that the orbit $\gamma_{a,k}$ is linearly stable for certain choices of $R$, $k<7$ and small $\delta \neq 0$.  Specifically, $\gamma_{a,1}$ is linearly stable when $\delta \neq 0$ for $R$ as in Proposition~\ref{prop1}. When $(k,n)$ are coprime, $\gamma_{a,k}$ is linearly stable for $\delta \neq 0$ and $n \geq n_{k}$ with $n_{k}$ and $R$ as in Proposition~\ref{prop2}. When  $\delta = 0$, $\gamma_{a,k}$ is neutrally stable for all $n$ and $k$ at any admissible $R$, and also when $\delta \neq 0$ for $R =\frac{\sin \frac{k\pi}{n} + \sqrt{\sin^{2} \frac{k\pi}{n} + 4n^{2}\delta^{2}}}{2n}$.
\label{theorem1}
\end{theorem}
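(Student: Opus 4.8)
The plan is to compute the monodromy (Poincar\'e) matrix $M$ of the orbit $\gamma_{a,k}$ and to read its linear stability from $\operatorname{tr}M$: since the billiard map is area preserving, $\det M=1$, so $\gamma_{a,k}$ is elliptic (linearly stable) when $|\operatorname{tr}M|<2$, parabolic (neutrally stable) when $|\operatorname{tr}M|=2$, and hyperbolic when $|\operatorname{tr}M|>2$. First I would write $M$ as an ordered product of elementary $2\times2$ transfer matrices in Jacobi (perpendicular-deviation) coordinates: a free-flight matrix
\[
F(t)=\begin{pmatrix}1 & t\\ 0 & 1\end{pmatrix}
\]
for each straight segment of length $t$, and a reflection matrix
\[
B(\kappa,\phi)=\begin{pmatrix}1 & 0\\ -\dfrac{2\kappa}{\sin\phi} & 1\end{pmatrix}
\]
at each collision, where $\phi$ is the angle the path makes with the tangent and $\kappa$ is the signed curvature, equal to $1$ on the focusing outer circle $\partial D$ and of opposite sign and magnitude $1/R$ on the dispersing scatterer $\partial D_{R}$. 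The geometric data to insert are the common incidence angle $\theta=k\pi/n$ at each of the $2n$ collisions with $\partial D$, the common chord length $2\sin(k\pi/n)$ between consecutive outer collisions, and the two segment lengths joining $\partial D$ to $\partial D_{R}$, which are explicit functions of $R$, $n$, $k$ and the displacement $\delta$.

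Second, I would exploit the two discrete symmetries to collapse this long product. Both collisions with $\partial D_{R}$ are perpendicular, so $\phi=\pi/2$ there and the orbit is time-reversal symmetric about each scatterer collision; writing the reversal involution as $\sigma=\operatorname{diag}(1,-1)$ and the half-orbit transfer matrix (one perpendicular collision to the next) as $P$, reversibility gives $M=\sigma P^{-1}\sigma P$, so it suffices to compute $P$. Within the half orbit the $n$ bounces on $\partial D$ all occur at angle $k\pi/n$ with equal chords, so that block is a power of the single fixed unimodular matrix $A=B(1,k\pi/n)\,F(2\sin(k\pi/n))$. A direct computation gives $\operatorname{tr}A=-2$, i.e.\ $A$ is parabolic (the integrable circular shear), and $A^{n}$ is evaluated in closed form through the Chebyshev identity $A^{n}=U_{n-1}(\tfrac12\operatorname{tr}A)\,A-U_{n-2}(\tfrac12\operatorname{tr}A)\,I$; at the degenerate value $\tfrac12\operatorname{tr}A=-1$ one has $U_{n-1}(-1)=(-1)^{n-1}n$, which is precisely what injects the factors of $n$ into all subsequent formulas. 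The scatterer reflection $B(1/R,\pi/2)$ and the two end segments then enter as explicit rational functions of $R$, $\delta$, $n$, $k$.

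Third, I would assemble $\operatorname{tr}M$ as an explicit function $g(R,\delta,n,k)$ and analyse the inequality $|g|<2$. Setting $\delta=0$ places the scatterer on the symmetry axis of the orbit, restoring an additional reflection symmetry that forces $M$ to remain parabolic; this yields $|\operatorname{tr}M|=2$ and hence neutral stability at every admissible $R$, by direct evaluation. The second neutral locus $R=\bigl(\sin\tfrac{k\pi}{n}+\sqrt{\sin^{2}\tfrac{k\pi}{n}+4n^{2}\delta^{2}}\bigr)/(2n)$, equivalently $R^{2}-\delta^{2}=R\sin(k\pi/n)/n$, is obtained by solving $g=2$ for the displaced configuration; it marks where the elliptic orbit degenerates and is exactly the boundary of the saddle--center bifurcation. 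For the genuinely elliptic windows I would then invoke Proposition~\ref{prop1} (case $k=1$, any $\delta\neq0$) and Proposition~\ref{prop2} (case $(k,n)$ coprime, $n\ge n_{k}$), which identify the intervals of $R$ on which $|g|<2$, and the theorem follows by collecting these cases.

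The main obstacle I anticipate is controlling the sign of $g-2$ and $g+2$ after the Chebyshev reduction: the half-orbit block contributes trigonometric factors in $k\pi/n$ multiplied by the weight $U_{n-1}(-1)=\pm n$, and these must be combined with the $1/R$ and $\delta$ dependence without losing the sign information that separates $|g|<2$ from $|g|>2$. In particular, proving that the elliptic window in $R$ is nonempty \emph{and} lies inside the admissible range — which is where the coprimality hypothesis on $(k,n)$ and the bound $k<7$ become necessary — requires a careful estimate of the interaction between the star-polygon geometry (entering through $k$ and the Chebyshev factor) and the scatterer size $R$, and this delicate comparison is the crux that Propositions~\ref{prop1} and~\ref{prop2} are designed to settle.
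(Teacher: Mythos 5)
Your route is, in essence, the paper's own: both arguments reduce linear stability of $\gamma_{a,k}$ to the sign of $|\operatorname{tr} DB^{2n+2}(z_{0})|-2$ for an explicit product of unimodular $2\times 2$ matrices, then locate the parabolic/saddle-center locus by solving $\operatorname{tr}=2$. Your Jacobi-coordinate transfer matrices are conjugate to the derivative (\ref{billiardderivative}) that the paper multiplies out directly; your Chebyshev evaluation of the circular block at $\operatorname{tr}A=-2$, with $U_{n-1}(-1)=(-1)^{n-1}n$, reproduces exactly the parabolic shear (\ref{outerbounce}) and correctly explains where the factors of $n$ in (\ref{tracetww}) and (\ref{tracethree}) come from; and your reversibility reduction $M=\sigma P^{-1}\sigma P$ is the linearization of the factorization $B^{2n+2}=(\mathcal{R}\circ B^{n+1})^{2}$ that the paper itself uses in Section~\ref{sec:level4} (note the perpendicular scatterer reflections must be split between $P$ and its time-reverse, e.g.\ as half-reflections, for that identity to be exact). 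One caution: your claim that at $\delta=0$ the restored reflection symmetry \emph{forces} parabolicity is not a valid mechanism --- symmetric periodic orbits are routinely elliptic or hyperbolic --- and in the paper the neutral stability at $\delta=0$ is simply read off from the evaluated trace being identically $2$; since you also appeal to direct evaluation, this is harmless, but the symmetry remark cannot carry the weight.

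The genuine gap is that the quantitative core is never carried out: you ``invoke'' Propositions~\ref{prop1} and~\ref{prop2}, but those propositions \emph{are} the paper's proof of Theorem~\ref{theorem1}, so citing them in a blind attempt is circular. Concretely, three things are left unproved. First, sufficiency: $nR^{2}-R\sin\frac{k\pi}{n}-n\delta^{2}>0$ gives $\operatorname{tr}<2$, but one must also verify $\operatorname{tr}>-2$, which in the paper requires the case analysis of the quadratic (\ref{negtracce}) according to the sign of its leading coefficient $f$ and the comparison $R^{-}>R_{\delta}$. Second, nonemptiness of the elliptic window inside the admissible radii: the paper handles this by letting $\delta\to 0$ and reducing to $\frac{\sin\frac{k\pi}{n}}{n}<R_{k,0}$, i.e.\ the inequality $2n\sin^{2}\frac{\pi}{n}>\tan\frac{k\pi}{n}$ of (\ref{inequalitystar}), which you never write down. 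Third, the bound $k\leq 6$ and the thresholds $n_{2}=5,\dots,n_{6}=53$ are extracted from that inequality via the monotonicity Lemma~\ref{lem} (the function $f(x)=\frac{x}{\pi}\arctan\left(2x\sin^{2}\frac{\pi}{x}\right)$ is increasing with limit $2\pi$) together with the observation that $nR_{k,0}-\sin\frac{k\pi}{n}>0$ propagates downward in $k$; none of this appears in your plan. You do correctly identify this interaction between the star-polygon admissibility constraint and the trace condition as the crux, but flagging the crux is not proving it: as written the proposal is a faithful outline of the paper's argument with its decisive estimates left as a black box.
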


The proof of the Theorem~\ref{theorem1} is in Section~\ref{sec:level3} of the paper. Propositions~\ref{prop1} and ~\ref{prop2} make up Theorem~\ref{theorem1}.

We also introduce another type (b) of $(2n+2)$-periodic orbits, with $n \geq 3$, (also see Fig.~\ref{fig1}) by slightly changing the initial reflection angle of the billiard trajectory away from $\frac{\pi}{n}$ by some small $\epsilon>0$ such that $\theta = \frac{\pi}{n} + \epsilon$ is not $\pi$-rational. In this case, the orbit in the $D$ (without $D_{R}$) is not periodic, and the polygon traced by the billiard path does not quite close. We position $\partial D_{R}$ tangentially to $\partial D$ and perpendicularly to the billiard path as before, creating a closed $(2n+2)$ orbit. Thus, type (b) orbits may be created from $\gamma_{a,1}$ by changing the angle of reflection $\frac{\pi}{n}$ slightly. Since the scatterer is tangent to $D$, the radius is of the scatterer is defined by the choice of $n$ and $\epsilon$, and will be specified in Section~\ref{sec:level4}. Denote the radius by $R_{b}$ and the scatterer by $D_{R_{b}}$ for this situation. For every $n \geq 3$, the corresponding billiard table is denoted $Q(R_{b})_{n, \epsilon}$. Periodic orbit corresponding to type (b) will be denoted $\gamma_{b}$. We suppress the dependence of $\gamma_{b}$ on $n$. 
 
For the type (b) configuration, we study linear and nonlinear (KAM) stability of $\gamma_{b}$. We have the following theorem.

\begin{theorem}
For every fixed $n \geq 3$, there exists an open interval in $\epsilon$ such that the $\gamma_{b}$ orbit in the billiard table $Q(R_{b})_{n, \epsilon}$ is KAM stable, with $R$ depending on $\epsilon$. Therefore each  billiard table in the sequence $Q(R_{b})_{n, \epsilon}$ is not ergodic, with $R_{b}$ decreasing to zero as $n \to \infty$.
\label{theorem2}
\end{theorem}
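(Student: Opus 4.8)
The plan is to realise $\gamma_b$ as an elliptic fixed point of the $(2n+2)$-fold iterate of the billiard map and to apply Moser's twist theorem, which guarantees that an elliptic fixed point of an area-preserving map is surrounded by invariant KAM curves accumulating on it — so that the map is not ergodic on any neighbourhood of the orbit — provided three conditions hold: (i) the linearised return map is elliptic, (ii) its eigenvalues $e^{\pm i\alpha}$ avoid the low-order resonances $e^{ij\alpha}=1$ for $j\in\{1,2,3,4\}$, and (iii) the first Birkhoff (twist) coefficient of the normal form is nonzero. Because $\gamma_b$ meets $\partial D_{R_b}$ perpendicularly at a point away from the cusp and meets $\partial D$ at angle $\tfrac{\pi}{n}+\epsilon$ bounded away from $0$ and $\pi$, the billiard map is smooth in a neighbourhood of the orbit, so the hypotheses of the twist theorem are available even though the table as a whole carries a cusp singularity.

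First I would compute the monodromy (linearised return) matrix $M$ along $\gamma_b$ by composing the standard one-collision Jacobians — free flight followed by reflection — over the $2n+2$ collisions, using curvature $1$ with the focusing sign at $\partial D$ and curvature $1/R_b$ with the dispersing sign at $\partial D_{R_b}$. Since $\det M = 1$, stability is governed entirely by $\operatorname{tr} M$. At $\epsilon=0$ the orbit degenerates to the parabolic type (a) orbit $\gamma_{a,1}$, for which $\operatorname{tr} M = \pm 2$; I would then show that as $\epsilon$ increases from $0$ the trace moves strictly into $(-2,2)$, e.g. by computing $\tfrac{d}{d\epsilon}\operatorname{tr} M|_{\epsilon=0}$, so that $|\operatorname{tr} M|<2$ on an open interval $\epsilon\in(0,\epsilon_0)$. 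This is the tangent analogue of the linear computations behind Theorem~\ref{theorem1}, with $R_b=R_b(n,\epsilon)$ substituted from the tangency constraint of Section~\ref{sec:level4}.

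Next I would remove the finitely many parameter values in $(0,\epsilon_0)$ at which $\operatorname{tr} M \in \{-1,0\}$, i.e. the third- and fourth-order resonances $\alpha=\tfrac{2\pi}{3},\tfrac{\pi}{2}$ (the values $\pm 2$ being already excluded by ellipticity), so that condition (ii) holds on the remaining open set. There I would pass to Birkhoff normal form, expanding the return map to third order about the fixed point and extracting the twist coefficient $\tau_1$; by the twist theorem, $\tau_1\neq0$ then forces invariant curves at Diophantine rotation numbers accumulating on the fixed point. Since $\tau_1$ depends analytically on $\epsilon$, it cannot vanish identically unless the normal form is degenerate, so discarding its isolated zeros leaves the required open interval of KAM-stable parameters. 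I expect the computation of $\tau_1$ to be the main obstacle: it requires the full third-order jet of a composition of $2n+2$ nonlinear maps, careful bookkeeping of the near-tangent geometry, and the genuinely delicate step of verifying that $\tau_1$ does not vanish identically in $\epsilon$.

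Finally, the non-ergodicity and scaling claims follow readily. An invariant KAM curve encircling the elliptic fixed point bounds a positive-measure invariant region whose complement also has positive measure, so no billiard table $Q(R_b)_{n,\epsilon}$ in the family is ergodic; and from the explicit tangency formula for $R_b(n,\epsilon)$ established in Section~\ref{sec:level4} one checks directly that $R_b\to 0$ as $n\to\infty$, since the near-inscribed polygonal path forces the radius of the tangent scatterer to shrink with $n$.
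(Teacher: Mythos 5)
Your overall architecture — ellipticity of the linearised return map for small $\epsilon>0$, exclusion of the resonances $\lambda^{3},\lambda^{4}=1$, Birkhoff normal form plus Moser's twist theorem, then non-ergodicity from the invariant curves and $R_{b}\to 0$ from the tangency formula — is exactly the paper's strategy, and your linear-stability step matches Proposition~\ref{typebprop} (the paper finds $\mbox{tr}\,(DB^{2n+2})=2-16n\epsilon(\cdots)+O(\epsilon^{2})$ with a sign that forces ellipticity for $0<\epsilon<\epsilon^{*}(n)$; since the rotation number satisfies $\mu=O(\sqrt{\epsilon})$, the low-order resonances you propose to excise are in fact automatically avoided for small $\epsilon$). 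The genuine gap is your treatment of the decisive condition (iii). You write that since the twist coefficient ``depends analytically on $\epsilon$, it cannot vanish identically unless the normal form is degenerate, so discarding its isolated zeros leaves the required open interval.'' This is not a proof: analyticity yields isolated zeros only \emph{after} one has shown the coefficient is not identically zero, and nothing you have established rules out identical vanishing — smooth area-preserving families with identically vanishing first Birkhoff coefficient exist (integrable families being the obvious example, and the $\epsilon=0$ limit here is precisely the parabolic orbit $\gamma_{a,1}$, so degeneracy at the endpoint is not merely hypothetical). You flag the computation of $\tau_{1}$ as ``the main obstacle,'' but the theorem \emph{is} that obstacle: the paper closes it by explicitly computing the coefficient, obtaining $A(n,\epsilon)=C(n)\,\epsilon^{-2}+O(\epsilon^{-1})$ as in (\ref{Asize}), where $C(n)$ is a product of positive factors and the square $\left(\cos\frac{\pi}{2n}-n\sin\frac{\pi}{2n}\right)^{2}$, verified nonzero for all $n\geq 3$; the $\epsilon^{-2}$ blow-up then gives nonvanishing of $A$ on a whole interval of small $\epsilon$, which is what the statement of the theorem requires. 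Without that explicit computation (or some substitute argument), your proof does not conclude.

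You also miss the structural device that makes the explicit third-jet computation feasible, which is worth noting because your plan to compose the full nonlinear jets of all $2n+2$ collisions is substantially heavier than what the paper does. The table and the orbit are symmetric about the $x$-axis, and the paper exploits the reversing involution $\mathcal{R}(s,\theta)=(-s,\pi-\theta)$ to write $B^{2n+2}=(\mathcal{R}\circ B^{n+1})^{2}$, so that only the half-map $\mathcal{R}\circ B^{n+1}$ — built from the closed-form maps $B_{D}$, $B_{in}$, $B_{out}$ of (\ref{eqn:bdd})--(\ref{eqn:bout}) and the chain-rule derivatives of Appendix B — needs to be expanded to third order before feeding Proposition~4.2. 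If you intend to carry out your version of the computation, you should either adopt this symmetry reduction or expect the bookkeeping to roughly double; and in either case the nonvanishing of the resulting coefficient must be exhibited, not inferred from analyticity.
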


The proof of Theorem~\ref{theorem2} is given in Section~\ref{sec:level4}. While some heuristic and numerical papers on billiards treat the existence of linearly stable (elliptic) periodic orbits as a sufficient criterion to deduce the existence of elliptic islands (a set of invariant curves of positive measure surrounding the elliptic orbit) and hence non-ergodicity of the billiard, for a rigorous mathematical investigation of stability of elliptic orbits  one needs a more delicate analysis. Indeed, `linear ellipticity' is a fragile dynamical property: for example, it is known that in certain two dimensional maps, elliptic fixed points are not surrounded by invariant curves after a small perturbation \cite{katok1970new}. Thus one needs to consider the effect of higher order terms to ensure (local) stability  of periodic orbits.

To prove Theorem~\ref{theorem2}, we apply Birkhoff Normal Form with Moser's Twist Theorem \cite{siegel2012lectures}, which is a commonly used approach to study KAM stability in area-preserving maps. This technique has been used for establishing  stability of some periodic orbits in certain billiard systems before. The papers by Kamphorst et al \cite{kamphorst2005first, carneiro2003elliptic} established the stability of 2-periodic orbits in billiards with strictly convex $C^{5}$ boundaries, while Donnay \cite{donnay1996elliptic} showed the existence of elliptic islands in generalised Sinai billiards.  Rom-Kedar and Turaev \cite{turaev1998elliptic} proved the existence of islands for certain near-ergodic Hamiltonian flows limiting to a billiard flow and also for billiards with steep repelling potentials  \cite{rom1999big}. However, explicit computations with Birkhoff normal form are not feasible for an arbitrary billiard boundary, since a priori one needs to know its details (the form of the billiard map, and the location of the periodic orbit). Because we are dealing with circular boundaries, our task is tractable in this regard.

We show that the Birkhoff coefficient \cite{moser2001stable} of $\gamma_{b}$ periodic orbits is nonzero, which implies KAM stability, hence showing non ergodicity of the billiard dynamics. 

The paper is organised as follows. In Section~\ref{sec:level2} we review the basic theory of billiards necessary for the study of linear stability properties of our billiard tables. In Section~\ref{sec:level3}, we study the billiard geometry for type (a) periodic orbits and analytically prove Theorem~\ref{theorem1}. Section~\ref{sec:level4} is devoted to the study of type (b) orbits. First we show their linear stability by the same methods as in Section~\ref{sec:level3}. Then by using KAM theory and Birkhoff normal form, we prove Theorem~\ref{theorem2}. The appendices  provide the derivation of the billiard map required for computation of the Birkhoff coefficient. The appendices also include an auxiliary  Lemma~\ref{lem} used in the proof of Proposition~\ref{prop2}.

\section{\label{sec:level2}Preliminaries}

We state some standard facts from the theory of billiards and area-preserving maps. The following information may be found in Chernov \cite{chernov2006chaotic} or in Berry \cite{berry1981regularity}.

Let $Q \in \mathbb{R}^{2}$ be a bounded domain, with $C^{l}$-smooth, $l \geq 3$, boundary $\partial Q$. We call $Q$ the billiard table. An orientation of $\partial Q$ is such that $Q$ is to the left on $\partial Q$. The billiard phase space $M$ consists of the boundary $\partial Q$ and unit velocity vectors $\vec{v}$ pointing inwards of $\partial Q$.  A standard coordinate system on $M$ is $(s,\theta)$ where $s$ is the arc length parameter on $\partial Q$ and $\theta \in (0,\pi)$ is the angle between the positively oriented tangent to $\partial Q$ at  the point $s$ and the vector $\vec{v}$. Then $M$ is the Poincare section for the billiard flow, and we define billiard map $B: M \mapsto M$, $B(s,\theta)=(s_{1},\theta_{1})$. The billiard map preserves the measure $\sin \theta ds d\theta$ on $M$. and it is well known that $B$ is area-preserving in the coordinates $(s, \cos \theta)$. Define the signed curvature of $\partial Q$ by $\kappa = \kappa(s)$, such that $\kappa <0$ for convex boundaries, $\kappa>0$ for concave boundaries, and $\kappa=0$ for flat boundaries. Let $\tau$ denote the \textit{flight distance} between two consecutive collision points on the boundary, $s$ and $s_{1}$, $\kappa$ is the curvature at $s$ and $\kappa_{1}$ is the curvature at $s_{1}$. Then derivative of $B$ at $z=(s,\theta)$ is given by

\begin{equation}
DB(z) = -\left( \begin{array}{cc}
\ \frac{\tau \kappa + \sin \theta}{\sin\theta_{1}} & \frac{\tau}{\sin\theta_{1}} \\ 
\frac{\tau \kappa \kappa_{1}  + \kappa_{1} \sin \theta}{\sin\theta_{1}} + \kappa & \frac{\tau \kappa_{1}}{\sin\theta_{1}} + 1 \end{array} \right)
\label{billiardderivative}
\end{equation}
\
To study the linear stability of an $n$-periodic point $B^{n}(z_{0})=z_{0}$, where $z_{0}=(s_{0},\theta_{0})$, we need to examine the product of $n$-matrices of the above type

\begin{equation}
DB^{n}(z_{0})=DB(z_{n-1})DB(z_{n-2})...DB(z_{0})
\label{stability}
\end{equation}
\
The characteristic polynomial of $DB^{n}(z_{0})$ is of the form $\lambda^{2} - \lambda \mbox{tr}(DB^{n}(z_{0})) + 1,$ where $\{\lambda, \lambda^{-1}\}$ are eigenvalues of $DB^{n}(z_{0})$. The corresponding periodic point is said to be elliptic and linearly stable if $|\mbox{tr}(DB^{n}(z_{0}))|<2$, hyperbolic and unstable if $|\mbox{tr}(DB^{n}(z_{0}))|>2$ and parabolic (neutrally stable) if $|\mbox{tr}(DB^{n}(z_{0}))|=2$.

\section{\label{sec:level3}Stability analysis of type (a) orbits}

In this section we prove Theorem~\ref{theorem1}. Consider the billiard table $Q(R)$ as defined in the introduction with the orbit $\gamma_{a,k}$.
Define  $\delta \geq 0$ to be the parallel displacement of $D_{R}$ from the midpoint of the billiard trajectory segment and along it. $R$ and $\delta$ have to be chosen such that to ensure $D_{R}$ stays in the interior of $D$. Thus we obtain a $(R, \delta)$- parameter family of $(2n+2)$ periodic orbits for fixed $n \geq 3$ and $k$.

\begin{figure*} [t]
\centering
\includegraphics[width=12cm]{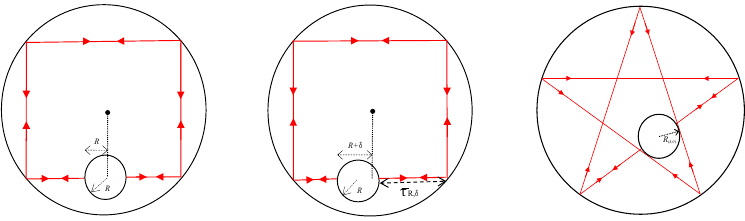}
\vspace*{-2mm}
\caption[]{Some periodic orbits $\gamma_{a,k}$; $k=1$, $n=4$, $\delta=0$ (left); $k=1$, $n=4$, $\delta \neq 0$ (middle); $k=2$, $n=5$ and $R_{k,0}$ as in eqn. (\ref{maxrn}) (right) }
\label{fig2}
\end{figure*}

The maximum value of $R$ depends on the choice of $\delta$, $k$ and $n$. From geometry, for a fixed $n \geq 3$, $\delta>0$ and $k=1$ we have the maximum possible $R=R_{\delta}$ such that $D_{R}$ avoids collision with the other parts of the same billiard trajectory:

\begin{equation}
R_{\delta} = 1- \sqrt{\delta^{2} + \cos^{2} \frac{\pi}{n}}
\label{maxdelta}
\end{equation}
\
For $\delta=0$, this expression yields $R_{0}$: 

\begin{equation}
R_{0} = 1 - \cos \frac{\pi}{n}
\label{rnm}
\end{equation}
\
which corresponds to $\partial D_{R_{0}}$ being \textit{tangent} to $\partial D$, thus forming a cusp.

When $k \neq 1$, with $(k,n)$ coprime, we have $n \geq 5$ and it is well known \cite{chernov2006chaotic} that the caustics for the orbit with the angle of reflection $\theta = \frac{k\pi}{n}$ in the disk $D$ (with $D_{R}$ removed) are just inner circles given  by the equation

$$x^{2}+y^{2} = \cos^{2} \frac{k \pi}{n}$$
\
Thus the billiard orbit produces a regular star $n$-gon  with  the inscribed tangent circle given by

$$x^{2}+y^{2} = \cos^{2} \frac{k \pi}{n}$$

It is simple to calculate that the length of the side of the $n$-gon is $2 \cos \frac{k\pi}{n} \tan \frac{\pi}{n}$.

For given $ n \geq 5$, $\delta \geq 0$ and $k>1$ we have the maximum possible radius $R = R_{k, \delta}$ for star orbits

\begin{equation}
R_{k, \delta} = \sin \frac{2\pi}{n}(\cos \frac{k\pi}{n}\tan \frac{\pi}{n} - \delta), \quad k>1
\label{maxradstar}
\end{equation}

This yields, for $\delta=0$

\begin{equation}
R_{k,0} = 2\cos \frac{k\pi}{n} \sin^{2} \frac{\pi}{n}, \quad k>1
\label{maxrn}
\end{equation}
\
We note that $R_{k,\delta}$ is such that the other segments of the billiard orbit do not hit $D_{R}$.

Define the map $B_{D}$ to be the composition of $(n-1)$ iterate of the well-known \cite{chernov2006chaotic} billiard map in a unit disk $D$:

\begin{equation}
B_{D}: (s, \theta) \mapsto (s + 2(n-1) \theta, \theta)
\label{bd}
\end{equation}
\
Define  $B_{in}$ to be the billiard map that takes the phase point $(s,\theta)$ with $s \in \partial D$ to  $(\bar{s}, \bar{\theta})$ where $ \bar{s} \in \partial D_{R}$, and define $B_{out}$ to be the map from $(\bar{s}, \bar{\theta})$ to a point $(\bar{\bar{s}}, \bar{\bar{\theta}})$ on $\partial D$ again. Thus, we may write the $(2n+2)$-periodic orbit as a square of the composition of  $B_{D}$, $B_{out}$ and $B_{in}$:

$$\left(B_{out} \circ B_{in} \circ B_{D}\right)^{2} = B^{2n+2}.$$

\begin{remark} Note that for linear stability computations, we do not require explicit formulae for $B_{in}$ and $B_{out}$ since we will be using the formula (\ref{billiardderivative}). However the explicit forms of $B_{in}$ and $B_{out}$  will be required for the study of nonlinear stability, and thus will be provided in appendix A. \end{remark}

The following Propositions~\ref{prop1} and ~\ref{prop2} constitute Theorem~\ref{theorem1}. 

\begin{proposition} Fix $k=1$, and $n \geq 3$. For $\delta=0$ and $R \leq R_{0}$, $\gamma_{a,1}$ is neutrally stable. For $\delta \neq0$ and $R < R_{\delta}$, the stability of $\gamma_{a,1}$ depends on the size of $\delta$ and $R$. In particular, for a small given $\delta>0$, $\gamma_{a,1}$ is linearly stable for $\frac{\sin \frac{\pi}{n} + \sqrt{\sin^{2} \frac{\pi}{n} + 4n^{2}\delta^{2}}}{2n} < R \leq R_{\delta}$. There is a saddle-center bifurcation at $R =\frac{\sin \frac{\pi}{n} + \sqrt{\sin^{2} \frac{\pi}{n} + 4n^{2}\delta^{2}}}{2n}$.  
\label{prop1}
\end{proposition}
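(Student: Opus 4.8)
The plan is to reduce the $(2n+2)$-fold monodromy $DB^{2n+2}(z_{0})$ to a product of explicit $2\times 2$ matrices built from (\ref{billiardderivative}) and to compute its trace as a function of $R$, $\delta$ and $n$. Writing $\sigma=\sin\frac{\pi}{n}$, the orbit $\gamma_{a,1}$ consists of $2n$ reflections off $\partial D$ (where $\kappa=-1$, $\theta=\frac{\pi}{n}$) and two perpendicular reflections off $\partial D_{R}$ (where $\kappa=\frac{1}{R}$, $\theta=\frac{\pi}{2}$). First I would record the geometric quantity that breaks the symmetry: since the scatterer centre sits on the side-line at distance $\delta$ from the midpoint, the two perpendicular impacts have \emph{unequal} incoming flight distances $\tau_{A}=\sigma+\delta-R$ and $\tau_{B}=\sigma-\delta-R$. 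Using (\ref{billiardderivative}) I would assemble the shear $A:=DB_{D}$, which by iterating the disk Jacobian ($\kappa=\kappa_{1}=-1$, $\tau=2\sigma$) is unipotent upper-triangular with off-diagonal entry $-2(n-1)$, and the two ``scatterer-excursion'' matrices $S_{A},S_{B}:=DB_{out}DB_{in}$ obtained by composing the disk-to-scatterer and scatterer-to-disk Jacobians at $\tau_{A}$ and $\tau_{B}$. Conjugation invariance of the trace then gives $DB^{2n+2}=S_{B}AS_{A}A$ up to the choice of base point on the orbit.

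For $\delta=0$ the computation collapses immediately. The perpendicular symmetric impact forces $\sigma-\tau=R$, which makes the lower-left entries of both $DB_{in}$ and $DB_{out}$ vanish; hence $S_{A}=S_{B}$ is itself a unipotent upper-triangular shear. A product of such shears is again a shear, so $\mbox{tr}\,DB^{2n+2}=2$ and $\gamma_{a,1}$ is parabolic (neutrally stable) for \emph{every} admissible $R\le R_{0}$, which is the first assertion of Proposition~\ref{prop1}.

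For $\delta\neq 0$ the lower-left entries no longer vanish and the trace becomes nontrivial. Here I would exploit that each $S_{\bullet}$ has equal diagonal entries, so with $m=-2(n-1)$ the trace reduces to
\begin{equation*}
\mbox{tr}\,(S_{B}AS_{A}A)=2w_{A}w_{B}+2m(w_{A}y_{B}+w_{B}y_{A})+(x_{A}y_{B}+x_{B}y_{A})+m^{2}y_{A}y_{B},
\end{equation*}
where $w_{\bullet},x_{\bullet},y_{\bullet}$ denote the entries of $S_{\bullet}$. Substituting the explicit entries and $m=-2(n-1)$, the decisive simplification is that the coefficients of $R^{2}$ and of $\delta^{2}$ each organise into the perfect square $1+2(n-1)+(n-1)^{2}=n^{2}$, collapsing the whole expression to
\begin{equation*}
\mbox{tr}\,DB^{2n+2}=2-\frac{16\,n\,\delta^{2}}{R^{2}\sigma^{2}}\bigl(nR^{2}-\sigma R-n\delta^{2}\bigr).
\end{equation*}
The hard part is precisely pushing this algebra through without error (the intermediate entries are unwieldy); everything downstream is then immediate, and the formula already passes the check that it returns $\mbox{tr}=2$ at $\delta=0$.

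Finally I would read off the stability. The quadratic factor $nR^{2}-\sigma R-n\delta^{2}$ has unique positive root $R^{\ast}=\frac{\sigma+\sqrt{\sigma^{2}+4n^{2}\delta^{2}}}{2n}$, which is exactly the claimed bifurcation value. Since the prefactor $\frac{16n\delta^{2}}{R^{2}\sigma^{2}}$ is positive, the trace lies below $2$ precisely when $nR^{2}-\sigma R-n\delta^{2}>0$, i.e. for $R>R^{\ast}$, and exceeds $2$ (real positive eigenvalues, hyperbolic) for $R<R^{\ast}$; for small $\delta$ one checks directly that the same range keeps $\mbox{tr}\,DB^{2n+2}>-2$, so $R^{\ast}<R\le R_{\delta}$ yields $|\mbox{tr}|<2$ and linear stability, with $R_{\delta}$ as in (\ref{maxdelta}). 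At $R=R^{\ast}$ the eigenvalues coalesce at $+1$ and split into a complex-conjugate (elliptic) pair as $R$ increases, which is the saddle-center bifurcation. The only point needing care beyond the trace computation is confirming that the stable interval is nonempty, i.e. $R^{\ast}<R_{\delta}$, which holds for small $\delta$ since $R^{\ast}\to\frac{\sigma}{n}<R_{0}$ as $\delta\to 0$.
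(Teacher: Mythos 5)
Your proposal is correct and takes essentially the same route as the paper: both reduce $DB^{2n+2}$ at $z_{0}$ to the product of the disk shear $\bigl(\begin{smallmatrix}1 & -2(n-1)\\ 0 & 1\end{smallmatrix}\bigr)$ with the two scatterer-excursion Jacobians at flight lengths $\sin\frac{\pi}{n}-R\mp\delta$, arrive at the identical trace formula $\mbox{tr}\,DB^{2n+2}=2-\frac{16n\delta^{2}\left(nR^{2}-R\sin\frac{\pi}{n}-n\delta^{2}\right)}{R^{2}\sin^{2}\frac{\pi}{n}}$, and read off parabolicity at $\delta=0$, stability for $R$ above the positive root $R^{\ast}$, and the saddle-center bifurcation at $R=R^{\ast}$ (I verified your equal-diagonal trace identity and the resulting formula against the paper's equation (\ref{tracetww})). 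Your refinements --- the unipotent-shear argument for the $\delta=0$ case, the structural organization of the trace, and the explicit nonemptiness check $R^{\ast}<R_{\delta}$ for small $\delta$ --- are sound polish on the same computation; the one step you compress to ``one checks directly,'' namely $\mbox{tr}>-2$, is exactly the quadratic inequality (\ref{negtracce}) whose case analysis the paper carries out in full, and for small $\delta$ (the regime the proposition asserts) both roots of that quadratic are negative, so your assertion holds.
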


\begin{proof}
Consider billiard geometry type (a), with $k=1$. Fix $n \geq 3$. We have a periodic orbit $\lbrace z_{0},...,z_{2n+1} \rbrace$, where $z_{i} = (s_{i}, \theta_{i})$. For $i \in \lbrace 0,...,n-1,n+1,...,2n \rbrace$, we have $s_{i} \in \partial D$, and for $i \in \ \lbrace n, 2n+1 \rbrace$, we have $s_{i} \in \partial D_{R}$. The initial condition is $z_{0} = (s_{0}, \frac{\pi}{n})$. We will calculate and establish the conditions on the trace of the derivative of the map $B^{2n+2}(z_{0})$ that ensure linear stability.

Assume that $D_{R}$ is displaced by $\delta \neq 0$ parallel to the orbit's segment in the direction of $z_{n-1}$. As is well known, \cite{chernov2006chaotic} the flight distance between two consecutive impact points $z_{i}$ and $z_{i+1}$ for $i \in \lbrace 0,1,..., (n-2), (n+1),..., (2n-1)\rbrace$  is $\tau =2\sin \frac{\pi}{n}$ since the collisions are on $D_{R}$. The flight distance between $z_{i}$ and $z_{i+1}$ for $i \in   \lbrace n-1,n\rbrace$ is $\tau_{R, \delta} =\sin \frac{\pi}{n} - R - \delta$, which corresponds to the length of the billiard trajectory segment between $\partial D$ and $\partial D_{R}$. Accordingly the flight distance for the reverse trajectory between $z_{i}$ and $z_{i+1}$, $i \in  \lbrace 2n, 2n+1 \rbrace$ is $\bar{\tau}_{R, \delta} = \sin \frac{\pi}{n} - R + \delta$. The signed curvature of $\partial D$ is $-1$, and the curvature of $\partial D_{R}$ is $\kappa=\frac{1}{R}$. 
\\
 
For $(n-1)$ consecutive bounces along the outer circle, we have the stability matrix 

\begin{equation}
DB_{D}(z_{0}) = \left( \begin{array}{cc}
\ 1 & -2(n-1) \\ 
0 & 1 \end{array} \right)
\label{outerbounce}
\end{equation}
\\

For the $n$-th bounce from $\partial D$ to $\partial D_{R}$, the stability matrix is

\begin{equation}
DB_{in}(z_{n-1}) = -\left( \begin{array}{cc}
\ -\tau_{R, \delta}  + \sin \frac{\pi}{n} & \tau_{R, \delta} \\ 
-\tau_{R, \delta} \kappa -1+ \kappa \sin \frac{\pi}{n}  & \tau_{R, \delta} \kappa + 1 \end{array} \right)
\end{equation}
\\
The stability matrix of the billiard map back from $\partial D_{R}$ to $\partial D$  is

\begin{equation}
DB_{out}(z_{n}) = -\left( \begin{array}{cc}
\ \frac{\tau_{R, \delta} \kappa + 1}{\sin \frac{\pi}{n}} & \frac{\tau_{R, \delta}}{\sin \frac{\pi}{n}} \\ 
\frac{-\tau_{R, \delta} \kappa -1}{\sin \frac{\pi}{n}}+ \kappa & \frac{-\tau_{R, \delta}}{\sin \frac{\pi}{n}} + 1 \end{array} \right)
\end{equation}
\\
Similar formulae follow for $DB_{D}(z_{n+1})$, $DB_{in}(z_{2n-1})$ and $DB_{out}(z_{2n})$. Using the expressions (\ref{billiardderivative}) and (\ref{stability}), we need to compute $\mbox{tr} \left(DB^{2n+2}(z_{0})\right)$, which turns out to be

\begin{equation}
\mbox{tr} \left(DB^{2n+2}(z_{0})\right)= 2 -\frac{16n\delta^{2} \left(nR^{2} - R\sin \frac{\pi}{n} - n\delta^{2}\right)}{R^{2}\sin^{2}\frac{\pi}{n}}
\label{tracetww}
\end{equation}
\\
Setting $\delta = 0$ shows parabolic stability of the corresponding orbit for all $R \leq R_{0}$. Note when $\partial D_{R}$ is tangent to $\partial D$, $\delta=0$ is necessary, since otherwise $D_{R}$ is no longer in the interior of $D$. This completes the proof of the first part of the proposition.

Let us consider the case $\delta \neq0$. Fix $n$ and small enough $\delta>0$. For linear stability, we need to ensure $|\mbox{tr} \left(DB^{2n+2}(z_{0})\right)| < 2$. From (\ref{tracetww}) a \textit{necessary} condition for the possibility of linearly stable orbits is 

\begin{equation}
nR^{2} - R \sin \frac{\pi}{n} - n\delta^{2}>0
\label{sizer}
\end{equation} 
\
This yields, upon rejecting the unphysical negative value, $R > \frac{\sin \frac{\pi}{n} + \sqrt{\sin^{2} \frac{\pi}{n} + 4n^{2}\delta^{2}}}{2n}$. Thus $R$ is determined from (\ref{maxdelta}) and (\ref{sizer}) by the inequalities

\begin{equation}
\frac{\sin \frac{\pi}{n} + \sqrt{\sin^{2} \frac{\pi}{n} + 4n^{2}\delta^{2}}}{2n} < R \leq R_{\delta}
\label{range}
\end{equation} 
\
Let us also show that (\ref{sizer}) is sufficient. Indeed, for sufficiency, (\ref{tracetww}) implies we need $-2 < 2 - \frac{16n\delta^{2} \left(nR^{2} - R\sin \frac{\pi}{n} - n\delta^{2}\right)}{R^{2}\sin^{2}\frac{\pi}{n}}$, which upon rearranging yields

\begin{equation}
\frac{R^{2}(\sin^{2} \frac{\pi}{n} - 4n^{2}\delta^{2})}{4n\delta^{2}} + R\sin \frac{\pi}{n} + n\delta^{2} >0
\label{negtracce}
\end{equation}
\
Denote by $f = \frac{\sin^{2} \frac{\pi}{n} - 4n^{2}\delta^{2}}{4n\delta^{2}}$ the coefficient of $R^{2}$ in (\ref{negtracce}). The \textit{formal} solutions of (\ref{negtracce}) are $R \in (-\infty, R^{-}) \cup (R^{+}, \infty)$ if $f > 0$ and $R \in (R^{+}, R^{-})$ if $f <0$, with $R > \frac{-n\delta^{2}}{\sin \frac{\pi}{n}}$ if $f = 0$. Here $R^{\pm} = \frac{-2\delta^{2}n}{\sin \frac{\pi}{n} \pm 2\delta n}$. In addition, we require that $R$ satisfies (\ref{range}). Consider the case $f>0$ corresponding to $\delta < \frac{\sin \frac{\pi}{n}}{2n}$. Then it is clear that $R^{\pm} < 0$. Since in the billiard context we have $R>0$, the \textit{physically allowed} solutions of (\ref{negtracce}) correspond to $R$ satisfying (\ref{range}). Consider now $f<0$, corresponding to $\delta > \frac{\sin \frac{\pi}{n}}{2n}$. Then $R^{+} < 0 < R^{-}$, so physically the possible domain for $R$ is $0 < R < R^{-}$. Tedious computations that we suppress show that $R^{-} > R_{\delta}$, hence the admissible values for $R$ lie in the set defined by (\ref{range}) indeed. The last case $f = 0$ that implies $\delta = \frac{\sin \frac{\pi}{n}}{2n}$ also leads to (\ref{range}). 

Hence $\gamma_{a,1}$ is linearly stable. Setting $R = \frac{\sin \frac{\pi}{n} + \sqrt{\sin^{2} \frac{\pi}{n} + 4n^{2}\delta^{2}}}{2n}$, we see that $\mbox{tr} \left(DB^{2n+2}(z_{0})\right)= 2$, so at this value of $R$ there is a saddle-center bifurcation, where the stability of $\gamma_{a,1}$ changes from hyperbolic to elliptic. This completes the proof of the proposition.
\end{proof}

\begin{figure}[h]
    \centering
    \includegraphics[width=0.4\textwidth]{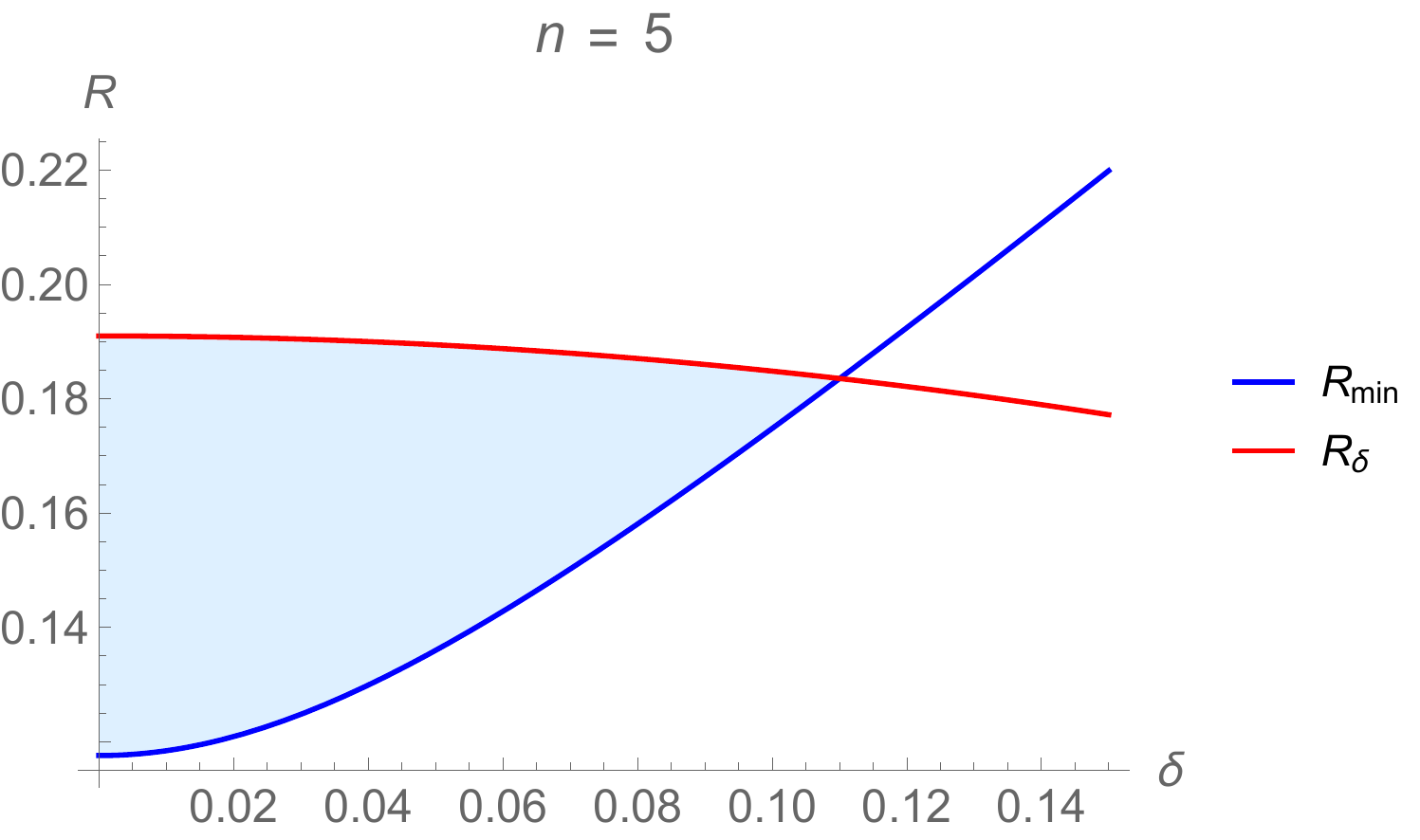}
    \includegraphics[width=0.4\textwidth]{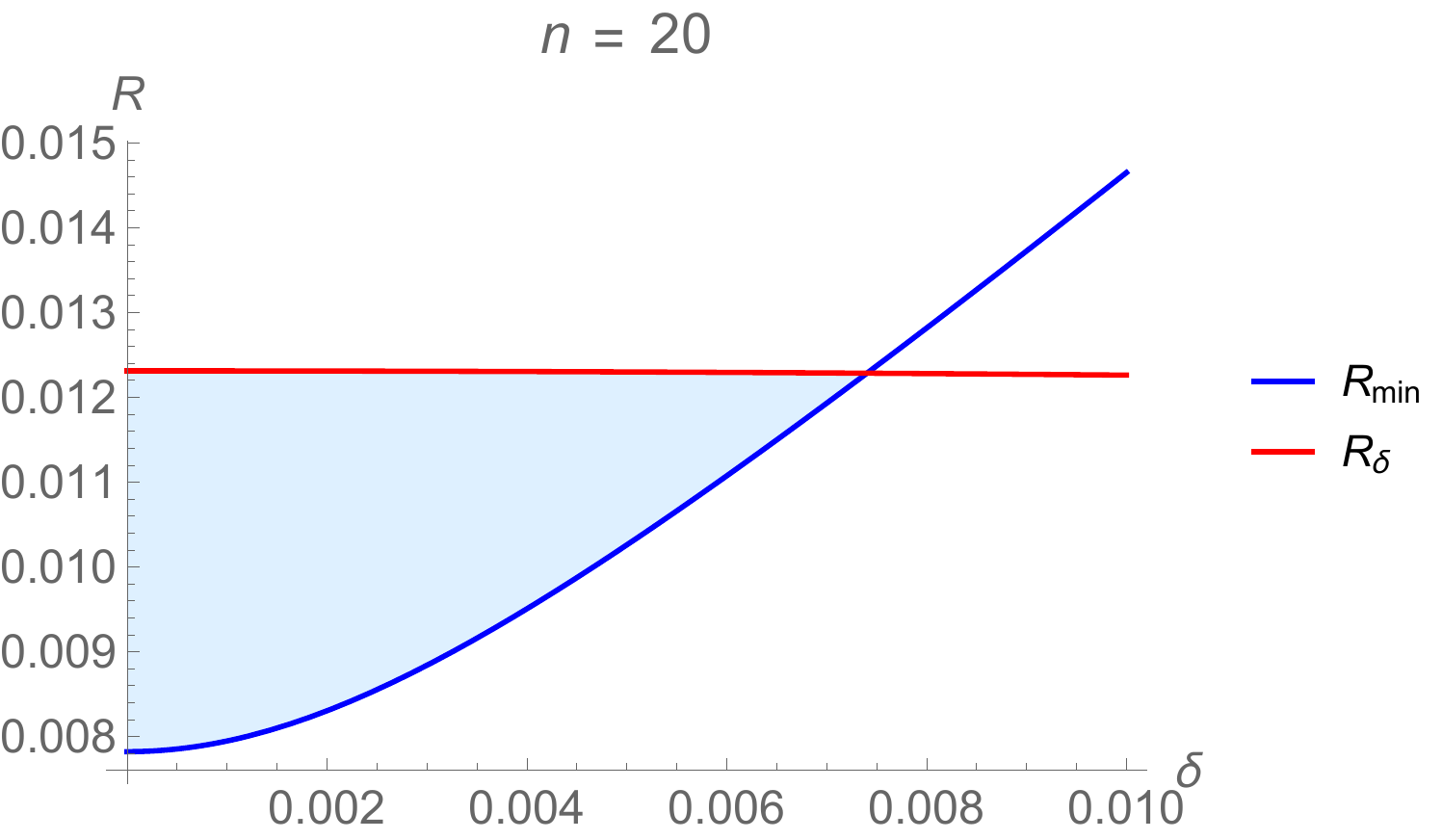}
    \caption{Admissible region for  linear stability of $\gamma_{a,1}$. The shaded region shows the range of $R$ defined by (\ref{range}) with $n=5,20$ for which $\gamma_{a,1}$ is linearly stable. Here $R_{min}=\frac{\sin \frac{\pi}{n} + \sqrt{\sin^{2} \frac{\pi}{n} + 4n^{2}\delta^{2}}}{2n}$ is the bifurcation value and the stability of $\gamma_{a,1}$ is parabolic on this curve. Below $R_{min}$, $\gamma_{a}$ is hyperbolic. For $n=5$, $R_{min}=R_{\delta}$ at $\delta=0.11004$, and for $n=20$, $R_{min}=R_{\delta}$ at $\delta=0.00740$. }
    \label{fig:sidebyside}
\end{figure}

Now consider star polygonal orbits: this is when $k \neq 1$ and $(k,n)$ are coprime. We have the following 

\begin{proposition} Let $n \geq 5$, and $k \leq \frac{n}{2}$, $(k,n)$ coprime. For $\delta =0$ and $R < R_{k,0}$, $\gamma_{a,k}$ is neutrally stable. For $\delta \neq 0$, and $R < R_{k,0}$, the stability of $\gamma_{a,k}$ depends on the relative size of $k$, $R$ and $\delta$. In particular, $\gamma_{a,k}$ is hyperbolic for $k \geq 7$. For $k < 7$ and small $\delta \neq 0$, $\gamma_{a,k}$ is linearly stable for $n \geq n_{k}$. Specifically, $n_{2}=5$, $n_{3}=9$, $n_{4}=13$, $n_{5}=21$ and $n_{6}=53$. There is a saddle-center bifurcation at $R=\frac{\sin \frac{k\pi}{n} + \sqrt{\sin^{2} \frac{k\pi}{n} + 4n^{2}\delta^{2}}}{2n}$. 
\label{prop2}
\end{proposition}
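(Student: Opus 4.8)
The plan is to repeat the computation of Proposition~\ref{prop1} essentially verbatim, since passing from the regular polygon to the star $\{n/k\}$ changes only the reflection angle and the associated flight distances, not the combinatorial type of the orbit. First I would note that $\gamma_{a,k}$ consists, exactly as in the $k=1$ case, of $n$ collisions with $\partial D$ tracing the star, a perpendicular collision with $\partial D_R$ at the midpoint of the chord joining the $n$-th and $(n+1)$-th impacts, and the time-reversed return, so that $B^{2n+2}=(B_{out}\circ B_{in}\circ B_D)^2$. The incidence angle is now $\theta=\frac{k\pi}{n}$, the chord between consecutive outer impacts has length $2\sin\frac{k\pi}{n}$, and the half-chord from $\partial D$ to the midpoint equals $\sin\frac{k\pi}{n}$, whence $\tau_{R,\delta}=\sin\frac{k\pi}{n}-R-\delta$ and $\bar\tau_{R,\delta}=\sin\frac{k\pi}{n}-R+\delta$.

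The key simplification is that a single reflection inside the disk has derivative $\left(\begin{smallmatrix}1&-2\\0&1\end{smallmatrix}\right)$ independently of $\theta$: substituting $\kappa=\kappa_1=-1$, $\tau=2\sin\theta$ and $\theta_1=\theta$ into (\ref{billiardderivative}) makes the angle cancel. Hence $DB_D$ remains $\left(\begin{smallmatrix}1&-2(n-1)\\0&1\end{smallmatrix}\right)$ as in (\ref{outerbounce}), and the matrices $DB_{in}$, $DB_{out}$ differ from those of Proposition~\ref{prop1} only through the single substitution $\sin\frac{\pi}{n}\mapsto\sin\frac{k\pi}{n}$. The trace computation therefore carries over directly and yields
\[
\mbox{tr}\left(DB^{2n+2}(z_0)\right)=2-\frac{16n\delta^2\left(nR^2-R\sin\frac{k\pi}{n}-n\delta^2\right)}{R^2\sin^2\frac{k\pi}{n}}.
\]
Setting $\delta=0$ gives $\mbox{tr}=2$ and hence neutral stability for every admissible $R$, and the argument of Proposition~\ref{prop1} applied to this trace shows $|\mbox{tr}|<2$ precisely when $nR^2-R\sin\frac{k\pi}{n}-n\delta^2>0$, i.e. for $R>R_{\min}:=\frac{\sin\frac{k\pi}{n}+\sqrt{\sin^2\frac{k\pi}{n}+4n^2\delta^2}}{2n}$, with a saddle-center bifurcation at $R=R_{\min}$.

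The genuinely new issue is whether the stability window $(R_{\min},R_{k,\delta})$ is nonempty, where $R_{k,\delta}$ from (\ref{maxradstar}) is the largest radius for which $D_R$ avoids the remaining chords of the star. As $\delta\to0$ one has $R_{\min}\to\frac{\sin\frac{k\pi}{n}}{n}$ and $R_{k,\delta}\to R_{k,0}=2\cos\frac{k\pi}{n}\sin^2\frac{\pi}{n}$ from (\ref{maxrn}), so by continuity it suffices to verify the strict inequality $R_{\min}<R_{k,0}$ at $\delta=0$; clearing denominators reduces this to
\[
\tan\frac{k\pi}{n}<2n\sin^2\frac{\pi}{n}.
\]
I would then analyse $g(n):=2n\sin^2\frac{\pi}{n}-\tan\frac{k\pi}{n}$. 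Its large-$n$ expansion $g(n)\sim\frac{\pi(2\pi-k)}{n}$ shows the inequality can hold for large $n$ exactly when $k<2\pi$, i.e. $k\le6$, whereas for $k\ge7$ one must establish $g(n)\le0$ for all admissible $n$, forcing $R_{\min}>R_{k,\delta}$ and hence hyperbolicity. For each $k\in\{2,\dots,6\}$ the threshold $n_k$ is then the least admissible $n$ for which $g(n)>0$.

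The main obstacle is converting the asymptotic sign of $g$ into a statement uniform in $n$. One must show, for $k\le6$, that $g(n)>0$ for \emph{all} $n\ge n_k$ (note $g$ is not monotone: it rises and then decreases back to $0^+$), and, for $k\ge7$, that $g(n)<0$ for every admissible $n$ rather than merely asymptotically; the thresholds $n_5=21$ and $n_6=53$ are delicate because $g(n_k)$ is only marginally positive there. This uniform control of the sign of $g$ is exactly the content of the auxiliary Lemma~\ref{lem} in the appendix, which I would invoke to propagate positivity past $n_k$ and to rule out sign changes, after which direct evaluation of the finitely many boundary cases pins down $n_2=5$, $n_3=9$, $n_4=13$, $n_5=21$, $n_6=53$ and completes the proof.
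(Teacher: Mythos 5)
Your proposal is correct and follows essentially the same route as the paper: the trace computation carries over verbatim from Proposition~\ref{prop1} to give (\ref{tracethree}), the necessary-and-sufficient condition $nR^{2}-R\sin\frac{k\pi}{n}-n\delta^{2}>0$ with the saddle-center bifurcation at $R_{\min}$, and the $\delta\to 0$ reduction to the inequality $2n\sin^{2}\frac{\pi}{n}>\tan\frac{k\pi}{n}$. Your use of Lemma~\ref{lem} to control the sign of $g(n)=2n\sin^{2}\frac{\pi}{n}-\tan\frac{k\pi}{n}$ uniformly in $n$ is exactly the paper's argument (the inequality is equivalent to $k<f(n)$ with $f$ strictly increasing to $2\pi$, whence $k\le 6$ and the numerically determined thresholds $n_{k}$), so the two proofs coincide in substance.
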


\begin{proof}

We again need to examine $\mbox{tr} \left(DB^{2n+2}(z_{0})\right)$, where we now have $z_{0} = (s_{0}, \frac{k\pi}{n})$, and the values of $\tau$, $\kappa$ are modified appropriately to account for $k \neq 1$ orbit configuration. The computations are identical to above, so we suppress them and proceed to give the result

\begin{equation}
\mbox{tr} \left(DB^{2n+2}(z_{0})\right)= 2 -\frac{16n\delta^{2}\left(nR^{2} - R\sin \frac{k\pi}{n} - n\delta^{2}\right)}{R^{2}\sin^{2}\frac{k\pi}{n}}
\label{tracethree}
\end{equation}

Setting $\delta=0$ we again see that the corresponding periodic orbit is parabolic. For $\delta \neq 0$, the same analysis as in the paragraph after (\ref{negtracce}) shows that the condition

$$nR^{2} - R\sin \frac{k\pi}{n} - n\delta^{2} >0$$

is necessary and sufficient for existence of linearly stable orbits. This yields the inequality $\frac{\sin \frac{k\pi}{n} + \sqrt{\sin^{2} \frac{k\pi}{n} + 4n^{2}\delta^{2}}}{2n} < R$. From (\ref{maxradstar}) we have $ \delta < \cos \frac{k\pi}{n} \tan \frac{\pi}{n}$. The same arguments as the ones following (\ref{negtracce}) imply that for given $k$ and $n$ with $0 < \delta \leq \frac{\sin \frac{k\pi}{n}}{2n}$, the allowed radius range is

\begin{equation}
\frac{\sin \frac{k\pi}{n} + \sqrt{\sin^{2} \frac{k\pi}{n} + 4n^{2}\delta^{2}}}{2n} < R \leq R_{k, \delta}
\label{boundsstar}
\end{equation}
\
while further laborious computations which we omit for the case $ \frac{\sin \frac{k\pi}{n}}{2n} < \delta < \cos \frac{k\pi}{n} \tan \frac{\pi}{n}$ lead to $\frac{\sin \frac{k\pi}{n} + \sqrt{\sin^{2} \frac{k\pi}{n} + 4n^{2}\delta^{2}}}{2n} < R \leq \tilde{R}$ where $\tilde{R}$ is at most $R_{k, \delta}$ (depending on the relative sizes of $n$ and $k$). Setting $R = \frac{\sin \frac{k\pi}{n} + \sqrt{\sin^{2} \frac{k\pi}{n} + 4n^{2}\delta^{2}}}{2n}$, we obtain $\mbox{tr} \left(DB^{2n+2}(z_{0})\right)= 2$ and thus there is a saddle-center bifurcation at this value of $R$ for a given $\delta$.

Let us find the range of $k$ for given $n$ such that (\ref{boundsstar}) is satisfied. Since we may take $\delta>0$ arbitrarily small, let us examine the limit $\delta \rightarrow 0$ in (\ref{boundsstar}) to facilitate the computation of admissible range of values of $k$ for a given $n$. Setting $\delta=0$ in (\ref{boundsstar}) yields

$$\frac{\sin \frac{k\pi}{n}}{n} < R < R_{k,0}$$

From which we obtain the inequality

\begin{equation}
2n \sin^{2} \frac{\pi}{n} > \tan \frac{k\pi}{n}
\label{inequalitystar}
\end{equation}

One needs to choose $k$ and $n$ such that this inequality is satisfied to obtain stable periodic orbits. Let us first examine (\ref{inequalitystar}) for large $n$. Expanding  (\ref{inequalitystar}) in Taylor series for $ n\rightarrow \infty $ gives the condition $2\pi > k$, i.e. $k \leq 6$ since $k \in \mathbb{Z}^{+}$.

Let us now determine admissible $k$ values more rigorously by examining (\ref{inequalitystar}) for any $n \geq 5$. Using the function $f$ in the Lemma~\ref{lem}  of Appendix C, we put $k = \lfloor f(n) \rfloor$, and we obtain that (\ref{inequalitystar}) is only satisfied for $k \leq 6$ for any $n$. Numerically we find that: $ k =2, n \geq 5; k=3, n \geq 9; k=4, n \geq 13; k=5, n \geq 21; k=6, n \geq 53$. Now $R_{k,0}$ is a decreasing function of $k$ while $\sin \frac{k\pi}{n}$ is increasing function of $k$, so if the inequality $nR_{k^{*},0} - \sin \frac{k^{*}\pi}{n}>0$ holds for some $k^{*}$, then it holds for all $k \leq k^{*}$. \end{proof}

\begin{remark}
Note that setting $\delta = 0$ makes $\gamma_{a,1}$ parabolic for all $R < R_{0}$, and $\gamma_{a,k}$ becomes parabolic for all $R < R_{k,0}$. Geometrically, $\delta = 0$ corresponds to a completely symmetric orbit. When $\delta  \neq 0$, the symmetry is lost, and the orbit $\gamma_{a,1}$ is only parabolic for $R =\frac{\sin \frac{\pi}{n} + \sqrt{\sin^{2} \frac{\pi}{n} + 4n^{2}\delta^{2}}}{2n}$, while $\gamma_{a,k}$ is only parabolic for $R=\frac{\sin \frac{k\pi}{n} + \sqrt{\sin^{2} \frac{k\pi}{n} + 4n^{2}\delta^{2}}}{2n}$; these values of $R$ are precisely the saddle-center bifurcation values given in Propositions~\ref{prop1} and ~\ref{prop2} respectively.
\end{remark}

\section{\label{sec:level4}Stability analysis of type (b) orbits}

We have established for the $Q(R)$ table cusp case that $\gamma_{a,1}$ orbits are parabolic. Now for the cusp geometry, it is possible to construct a type (b) $(2n+2)$- periodic orbit that corresponds to the case when the initial reflection angle on $\partial D$ is \textit{not} $\pi$-rational: $\theta_{0} \neq \frac{\pi}{n}$ (see Fig.~\ref{fig1}). We denote these orbits $\gamma_{b}$.  Again, we create a closed orbit by positioning $D_{R}$ in the orbit's path perpendicularly, such that the $R$ value for the tangency condition now reads

\begin{equation}
R_{b} = 1 + \frac{\cos \theta_{0}}{\cos n\theta_{0}}
\label{radiusb}
\end{equation}
\
We note that in this case $R_{b}$ depends on $\epsilon$ and the billiard table is $Q(R_{b})_{n, \epsilon}$. In the following two subsections, we will prove linear and nonlinear stability of $\gamma_{b}$, thus proving Theorem~\ref{theorem2}.

\subsection{\label{sec:level5}Linear stability of $\gamma_{b}$}

First, we investigate linear stability of $\gamma_{b}$. In the general case for $\theta_{0} \neq \frac{\pi}{n}$, the expression for $DB^{2n+2}$ is complicated and so it is difficult to draw any conclusions for the stability of the periodic orbit. Instead, let us pick $\epsilon >0$ and investigate the limit $\theta_{0} =  \frac{\pi}{n} + \epsilon$ as $\epsilon \rightarrow 0$.

\begin{proposition} There exists $\epsilon^{*}(n)>0$ such that for all $\epsilon< \epsilon^{*}(n)$, $\gamma_{b}$ orbits are linearly stable for the initial reflection angle $ \theta_{0} = \frac{\pi}{n} + \epsilon.$
\label{typebprop}
\end{proposition}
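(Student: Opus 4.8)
The plan is to compute $\mbox{tr}\left(DB^{2n+2}(z_0)\right)$ for the type (b) configuration by exactly the method of Propositions~\ref{prop1} and~\ref{prop2}, but now with reflection angle $\theta_0 = \frac{\pi}{n}+\epsilon$ and with the tangency radius $R_b=R_b(\theta_0)$ of (\ref{radiusb}) coupled to $\theta_0$, and then to expand the result about $\epsilon=0$. The base point $\epsilon=0$ is the cusp orbit $\gamma_{a,1}$, which Proposition~\ref{prop1} (the $\delta=0$ case) already tells us is parabolic with trace exactly $2$. So the whole question is how the trace departs from $2$ once $\epsilon>0$.

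First I would record the geometric data as functions of $\theta_0$: the flight distance between consecutive collisions on $\partial D$ is $2\sin\theta_0$, the signed curvature of $\partial D$ is $-1$ and of $\partial D_{R_b}$ is $1/R_b$, and the perpendicular flight distance from the last outer collision to $\partial D_{R_b}$ is determined by the tangency condition (\ref{radiusb}). Feeding these into (\ref{billiardderivative}) produces the three $\epsilon$-dependent factors: the $(n-1)$-fold bounce matrix $DB_D$ (the analogue of (\ref{outerbounce}) with $\theta_0$ in place of $\frac{\pi}{n}$), the entry matrix $DB_{in}$, and the exit matrix $DB_{out}$.

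Next I would assemble $DB^{2n+2}=\left(DB_{out}\,DB_{in}\,DB_D\right)^2$ and take the trace. Since the orbit still meets $\partial D_{R_b}$ perpendicularly at $z_n$ and $z_{2n+1}$, it remains time-reversal symmetric, so the second half-orbit monodromy is conjugate to the inverse of the first; this reduces $\mbox{tr}\left(DB^{2n+2}(z_0)\right)$ to an expression in the entries of the single half-orbit matrix $DH=DB_{out}\,DB_{in}\,DB_D$, roughly halving the algebra. The decisive step is the expansion in $\epsilon$: setting $\epsilon=0$ must return $2$ (a consistency check against Proposition~\ref{prop1}), and the sign of the first nonvanishing correction decides stability. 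Unlike the displacement $\delta$ of type (a), which enters (\ref{tracetww}) \emph{evenly} through $\delta^2$ because $\pm\delta$ give mirror-image orbits, the angle perturbation $\epsilon$ preserves the orbit's symmetry while $\frac{\pi}{n}+\epsilon$ and $\frac{\pi}{n}-\epsilon$ are genuinely inequivalent configurations (with different radii); hence no evenness is forced and I expect a \emph{linear} leading term, $\mbox{tr}\left(DB^{2n+2}(z_0)\right)=2-c(n)\,\epsilon+O(\epsilon^2)$. Because both $\theta_0$ and the radius move linearly with $\epsilon$ --- indeed $\left.\frac{dR_b}{d\epsilon}\right|_{\epsilon=0}=\sin\frac{\pi}{n}$ --- these two contributions must be differentiated together. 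Showing $c(n)>0$ gives $\mbox{tr}<2$ for small $\epsilon>0$, hence $|\mbox{tr}|<2$ and linear stability, with $\epsilon^*(n)$ taken as the first $\epsilon$ at which the trace leaves $(-2,2)$.

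The main obstacle is precisely this sign determination at a degenerate parabolic base point: the zeroth-order term sits exactly on the stability boundary $\mbox{tr}=2$, so the conclusion rests entirely on the sign and non-vanishing of the first-order coefficient $c(n)$, which blends the $\epsilon$-dependence of the angle, of the flight distances, and of the coupled radius $R_b(\epsilon)$. Verifying $c(n)>0$ for \emph{every} $n\geq 3$, rather than only asymptotically as $n\to\infty$, is the delicate part; if the linear term should degenerate for some $n$ one would have to push to the $O(\epsilon^2)$ term, so establishing that $c(n)$ stays bounded away from zero uniformly in $n$ is the crux of the argument.
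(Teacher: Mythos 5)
Your proposal follows essentially the same route as the paper: the authors compute $\mbox{tr}\left(DB^{2n+2}(z_{0})\right)$ with $\tau$, $\kappa$, $R=R_{b}$ and $\theta_{0}=\frac{\pi}{n}+\epsilon$ modified exactly as you describe, and a Mathematica-assisted Taylor expansion produces the linear-in-$\epsilon$ structure you predict, namely (\ref{trb}), $\mbox{tr}=2-\frac{16n\epsilon\left(\cos\frac{\pi}{n}-n\cot\frac{\pi}{n}+n\cos\frac{\pi}{n}\cot\frac{\pi}{n}\right)}{\cos\frac{\pi}{n}-1}+O(\epsilon^{2})$. The uniformity concern you flag as the crux does not materialize: the parenthesized factor is negative for every $n\geq 3$ (and the denominator $\cos\frac{\pi}{n}-1$ is negative), so the closed form gives $c(n)>0$ for all $n$ at once --- in fact $c(n)$ grows like $n^{3}$, consistent with Remark~\ref{remarkb} --- and no push to $O(\epsilon^{2})$ is ever needed.
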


\begin{proof}
We consider the trace of $\left(DB_{out}DB_{in} DB_{D}(z_{0})\right)^{2}$ as before, modifying the values of $\tau$, $\kappa$, $R$ and $\theta_{0}$ as appropriate. Expanding the trace in Taylor series in $\epsilon$ with the aid of Mathematica, we find

\begin{multline}
\mbox{tr} \left(DB^{2n+2}(z_{0}) \right)= \\
2 - \frac{16n\epsilon \left(\cos\frac{\pi}{n} - n\cot\frac{\pi}{n} + n\cos\frac{\pi}{n} \cot\frac{\pi}{n}\right)}{\cos\frac{\pi}{n}-1}+O(\epsilon^{2})
\label{trb}
\end{multline}
\
Since $\left(\cos\frac{\pi}{n} - n\cot\frac{\pi}{n} + n\cos\frac{\pi}{n} \cot\frac{\pi}{n}\right) $ is negative, we may ensure that $\gamma_{b}$ is elliptic if we take a small enough positive $\epsilon$.
\label{proofb}
\end{proof}

\begin{remark}
The formula (\ref{trb}) implies for linear stability of $\gamma_{b}$, one  has to take $\epsilon \lesssim \frac{\cos \frac{\pi}{n} - 1}{4n(\cos\frac{\pi}{n} - n\cot\frac{\pi}{n} + n\cos\frac{\pi}{n} \cot\frac{\pi}{n})}$, which implies $\epsilon \simeq \frac{\pi^{2}}{4n^{3}(\pi - 2)}$ for very large $n$.
\label{remarkb}
\end{remark}

\subsection{\label{sec:level6}KAM stability of $\gamma_{b}$}

To show KAM stability of $\gamma_{b}$, we make use of the following well-known result regarding Birkhoff normal form:

\begin{widetext}
\begin{proposition} \cite{kamphorst2005first}, \cite{carneiro2003elliptic}, \cite{grigo2009billiards} 
Suppose that the map $B^{n}(s,p)$ is area-preserving and has an $n$-periodic point at $(0,0)$. Assume $B^{n}$ is $C^{k}$ with $k \geq 4$. Writing its Taylor expansion up to order $3$ in the neighbourhood of $(0,0)$,

\begin{equation}
 B^{n}(s,p) = \left( \begin{array}{ll}
         a_{10}s + a_{01}p+a_{20}s^{2} +a_{11}sp +...+a_{03}p^{3}\\
        b_{10}s + b_{01}p + b_{20}s^{2} + b_{11}sp + ...+b_{03}p^{3}\end{array} \right) + O(|(s,p)|^{4}) 
\label{Taylorf}        
\end{equation}

If the point $(0,0)$ is elliptic with eigenvalues $\lambda = \exp(\pm i \mu)$ satisfying the nonresonant condition $\lambda^{3}, \lambda^{4} \neq 1$, there is a real-analytic coordinate change that takes the map to its Birkhoff normal form $z \rightarrow \lambda z e^{iA|z|^{2}} + O(|z|^{4})$. The first Birkhoff coefficient $A$ is

\begin{equation}
A =  \Im{c}_{21}+ \frac{\sin{\mu}}{\cos{\mu}-1}\left(3|c_{20}|^{2} + \frac{2 \cos \mu -1 }{2\cos\mu +1} |c_{02}|^2\right)
\label{birk}
\end{equation}
\\

Where

$$ \Im{c}_{21} = \frac{1}{8}a_{10}[-a_{12} + 3\frac{b_{10}a_{03}}{a_{01}} - 3\frac{a_{01}b_{30}}{b_{10}} + b_{12}] - \frac{1}{8}b_{10}[a_{12} - 3\frac{a_{01}a_{30}}{b_{10}} - \frac{a_{01}b_{21}}{b_{10}} + 3b_{03}]$$

$$|c_{20}|^2 = \frac{1}{16}\sqrt{\frac{-a_{01}}{b_{10}}}[\frac{b_{10}}{a_{01}}a_{02} + a_{20} + b_{11}]^{2} + \frac{1}{16}\sqrt{\frac{-b_{10}}{a_{01}}}[\frac{a_{01}}{b_{10}}b_{20} + b_{02} + a_{11}]^{2}$$

$$|c_{02}|^{2} =\frac{1}{16}\sqrt{\frac{-a_{01}}{b_{10}}}[\frac{b_{10}}{a_{01}}a_{02} + a_{20} - b_{11}]^{2} + \frac{1}{16}\sqrt{\frac{-b_{10}}{a_{01}}}[\frac{a_{01}}{b_{10}}b_{20} + b_{02} - a_{11}]^{2}$$
\\

If $A$ is non-zero, the the fixed point is nonlinearly stable \cite{moser2001stable}. 
\end{proposition}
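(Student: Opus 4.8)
The statement is the classical combination of Birkhoff normalization with Moser's twist theorem, and the plan is to prove it in three stages followed by the stability conclusion. First I would diagonalize the linear part of $B^{n}$. Because $(0,0)$ is elliptic with eigenvalues $\lambda = e^{\pm i\mu}$ and $\det DB^{n}=1$ by area preservation, there is a linear change from the real coordinates $(s,p)$ to a complex coordinate $z$ (with $\bar z$ its conjugate) conjugating the linearization to $z \mapsto \lambda z$, where $\bar\lambda = \lambda^{-1}$. In these coordinates the map becomes $z \mapsto \lambda z + \sum_{a+b=2} c_{ab} z^{a}\bar z^{b} + \sum_{a+b=3} c_{ab} z^{a}\bar z^{b} + O(|z|^{4})$, where each $c_{ab}$ is an explicit linear combination of the real Taylor coefficients $a_{ij},b_{ij}$ from (\ref{Taylorf}); in particular $c_{20}, c_{02}, c_{21}$ are the quantities appearing in (\ref{birk}).

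Second, I would normalize order by order. A near-identity change $z \mapsto z + h_{ab} z^{a}\bar z^{b}$ shifts the coefficient of $z^{a}\bar z^{b}$ by $h_{ab}(\lambda - \lambda^{a}\bar\lambda^{b}) = h_{ab}\,\lambda(1 - \lambda^{\,a-b-1})$, so a monomial can be cancelled exactly when $\lambda^{\,a-b-1}\neq 1$. At second order the only obstruction is the monomial $\bar z^{2}$, removable iff $\lambda^{3}\neq 1$; at third order the monomial $z^{2}\bar z$ is genuinely resonant (here $\lambda^{\,a-b-1}=\lambda^{0}=1$) and cannot be removed, while the remaining cubic monomial $\bar z^{3}$ is removable iff $\lambda^{4}\neq 1$. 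These are precisely the stated non-resonance hypotheses, with ellipticity ruling out $\lambda=\pm 1$. Hence after two rounds of normalization the map reduces to $z \mapsto \lambda z + c'_{21} z^{2}\bar z + O(|z|^{4})$, and area preservation forces $c'_{21} = i\lambda A$ with $A$ real, giving the claimed normal form $z \mapsto \lambda z\, e^{iA|z|^{2}} + O(|z|^{4})$.

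Third, I would extract the explicit formula for $A$. The key point, and the main computational obstacle, is that the quadratic transformation used to clear the second-order terms feeds, through composition, back into the third-order coefficient; consequently $c'_{21}$ is not $c_{21}$ alone but $c_{21}$ plus terms quadratic in $c_{20},c_{02}$, which is exactly the origin of the $|c_{20}|^{2}$ and $|c_{02}|^{2}$ contributions. Carrying out the substitution, collecting the coefficient of $z^{2}\bar z$, and rewriting the small-divisor denominators $\lambda - \lambda^{a}\bar\lambda^{b}$ through $\lambda=e^{i\mu}$ produces the factors $\frac{\sin\mu}{\cos\mu-1}$ and $\frac{2\cos\mu-1}{2\cos\mu+1}$ and yields formula (\ref{birk}), with $\Im c_{21}$, $|c_{20}|^{2}$, $|c_{02}|^{2}$ expressed in the $a_{ij},b_{ij}$ as displayed. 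This bookkeeping is routine but lengthy, and it is where the real denominators $\cos\mu-1$ and $2\cos\mu+1$ must be verified to be nonzero, which again follows from $\lambda^{3},\lambda^{4}\neq 1$.

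Finally, writing $z = \sqrt{I}\,e^{i\phi}$ turns the normal form into the twist map $\phi \mapsto \phi + \mu + A I + \cdots$, $I \mapsto I + \cdots$, whose rotation number varies monotonically with the action $I$ precisely because $A\neq 0$. Moser's twist theorem, applicable at the assumed smoothness $C^{k}$ with $k\geq 4$, then yields a family of invariant closed curves accumulating at the origin; in the plane each such curve bounds an invariant disk, and since the map is a homeomorphism preserving the curve it maps this disk into itself, trapping every orbit started inside. As these curves shrink to the fixed point, the fixed point is Lyapunov (nonlinearly) stable. The genuine difficulty throughout is the third-order algebra of the third stage; the normalization scheme and the concluding application of Moser's theorem are otherwise standard.
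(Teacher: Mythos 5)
Your outline is correct and is precisely the standard argument behind this result: complex diagonalization of the linear part, removal of the nonresonant monomials of orders two and three under the hypotheses $\lambda^{3},\lambda^{4}\neq 1$, the quadratic-feedback computation of the resonant coefficient of $z^{2}\bar z$ (which is the source of the $|c_{20}|^{2}$ and $|c_{02}|^{2}$ terms and of the denominators $\cos\mu-1$ and $2\cos\mu+1$), and finally Moser's twist theorem in the regularity class $C^{k}$, $k\geq 4$, with $A\neq 0$ supplying the twist. Note that the paper itself gives no proof of this proposition --- it is quoted with citations to Kamphorst--Pinto-de Carvalho, Carneiro et al.\ and Grigo, where exactly this scheme is carried out --- so your proposal matches the (cited) approach the paper relies on.
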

\end{widetext}

Let us compute $A$ for $\gamma_{b}$. The boundaries $\partial D$ and $\partial D_{R}$ are analytic except at the \textit{cusp} - which corresponds to the point tangency of $\partial D_{R}$ to $\partial D$. However, our periodic orbits are bounded away from the cusp, so we do not have to deal with this issue. Observe that the billiard geometry is symmetrical about the x-axis and $\gamma_{b}$ is also symmetric with respect to $D_{R}$. Define $z_{0} = (\pi + \pi/n + \epsilon(1-n), \pi/n + \epsilon)$ to be the fixed point of $B^{2n+2}$ corresponding to $\gamma_{b}$. We have $B_{out} \circ B_{in} \circ B_{D}(z_{0}) = B^{n+1}(z_{0})$ and $\quad B^{2n+2}(z_{0}) = (B^{n+1}(z_{0}))^{2}$, with the explicit expressions for $B_{out}$, $B_{in}$ and $B_{D}$ given in the appendix A. 

\begin{figure} [h]
\centering
\includegraphics[width=5 cm]{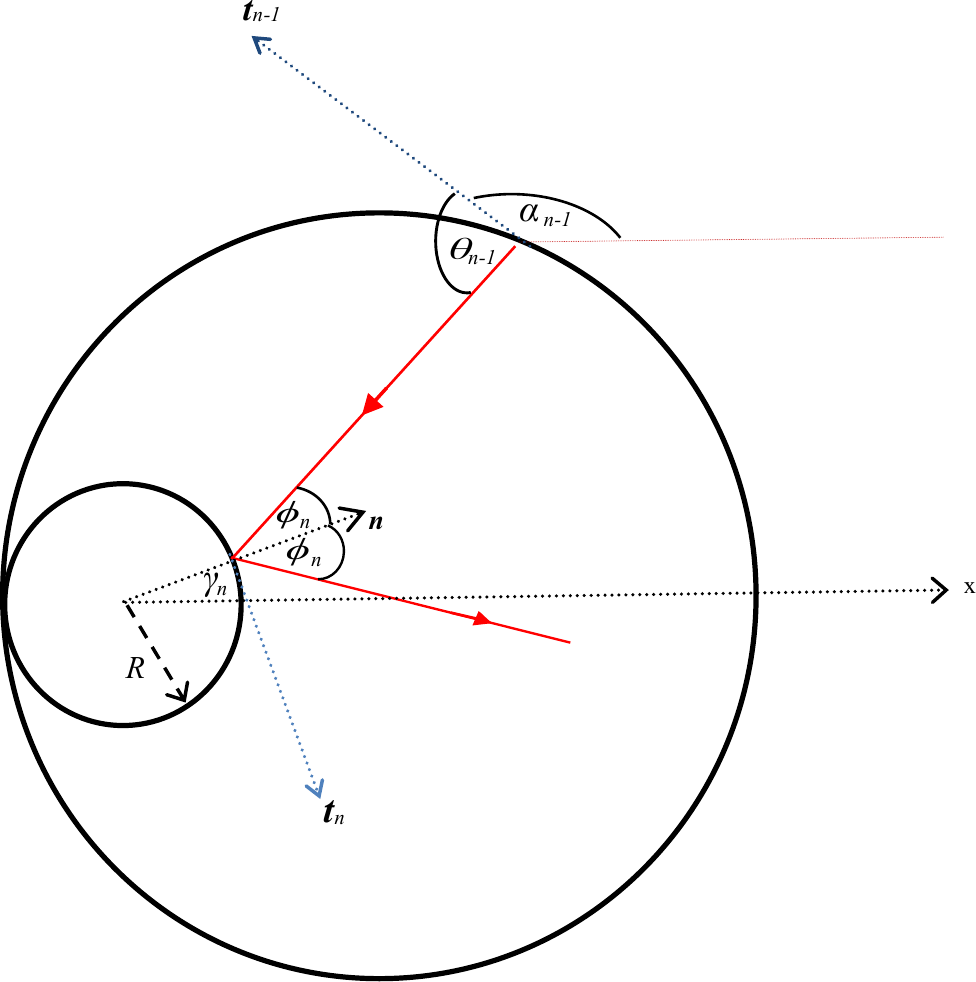}
\vspace*{-1mm}
\caption[]{Sketch of the billiard geometry for derivation of ~\eqref{eqn:bin} and ~\eqref{eqn:bout}. $D_{R}$ is symmetric on the x-axis. }
\label{fig4}
\end{figure}

We have the following

\begin{proposition}
For every fixed $n \geq 3$ and sufficiently small $\epsilon>0$, the point $z_{0} = ( \pi + \pi/n + \epsilon(1-n), \pi/n + \epsilon)$ is KAM stable for $B^{2n+2}$. 
\label{prop3}
\end{proposition}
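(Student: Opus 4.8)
The plan is to invoke the Birkhoff normal form result stated above: once $z_0$ is known to be an elliptic, non-resonant fixed point of $B^{2n+2}$, it suffices to verify that the first Birkhoff coefficient $A$ of \eqref{birk} is nonzero. Ellipticity for small $\epsilon>0$ is already supplied by Proposition~\ref{typebprop}, and the required smoothness holds because $\gamma_b$ is bounded away from the cusp, so the billiard map is analytic along the orbit. Two tasks then remain: checking the non-resonance condition $\lambda^3,\lambda^4\neq 1$, and evaluating $A$.

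For non-resonance I would read off the rotation angle directly from \eqref{trb}. Writing $\lambda=e^{\pm i\mu}$ we have $\mathrm{tr}\,DB^{2n+2}(z_0)=2\cos\mu$, so \eqref{trb} gives $2\cos\mu = 2 - c(n)\,\epsilon + O(\epsilon^2)$, where $c(n)>0$ is the coefficient appearing there. Hence $\cos\mu = 1-\tfrac12 c(n)\epsilon+O(\epsilon^2)$, so $\mu\to 0$ as $\epsilon\to 0$, and more precisely $\mu\sim\sqrt{c(n)\,\epsilon}$. Consequently, for all sufficiently small $\epsilon>0$ one has $0<\mu<\pi/2$, which excludes the resonant angles $\mu=\pi/2$ (giving $\lambda^4=1$) and $\mu=2\pi/3$ (giving $\lambda^3=1$). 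Thus the hypotheses of the Birkhoff normal form result are met on some interval $(0,\epsilon^{*}(n))$.

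The heart of the proof is the computation of $A$. Using the explicit forms of $B_{D}$, $B_{in}$ and $B_{out}$ from Appendix A, I would form $B^{n+1}=B_{out}\circ B_{in}\circ B_{D}$, square it to get $B^{2n+2}=(B^{n+1})^2$, and expand to third order about $z_0$ with a computer algebra system, reading off the coefficients $a_{ij},b_{ij}$ of \eqref{Taylorf} as functions of $n$ and $\epsilon$. The reflection symmetry of the configuration about the $x$-axis, together with the time-reversibility of the billiard map (under which $z_0$ and $B^{n+1}(z_0)$ are exchanged), constrains these coefficients and can be used both to shrink the computation and as a consistency check. Substituting into the formulae for $\Im c_{21}$, $|c_{20}|^2$ and $|c_{02}|^2$ and then into \eqref{birk} produces $A=A(n,\epsilon)$, and it remains to extract its leading behaviour as $\epsilon\to 0$ and show this is nonzero for every fixed $n\ge 3$.

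The main obstacle is precisely this $\epsilon\to 0$ limit, where $z_0$ degenerates to the parabolic type (a) orbit. Since $\mu\sim\sqrt{c(n)\,\epsilon}$, the prefactor $\sin\mu/(\cos\mu-1)\sim -2/\mu\sim \epsilon^{-1/2}$ in \eqref{birk} diverges, so one must track the matching orders in the Puiseux expansions of $|c_{20}|^2$, $|c_{02}|^2$ and $\Im c_{21}$ to isolate the true leading term of $A$; retaining too few orders in the $\epsilon$-expansion of the Taylor coefficients would give a spurious answer. I stress that $A$ need not stay bounded: it is enough that $A\neq 0$, so even a leading term of order $\epsilon^{-1/2}$ with nonzero coefficient establishes the claim. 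Having shown that the leading coefficient of $A(n,\epsilon)$ is nonzero for each fixed $n$, and that $A$ is real-analytic (indeed algebraic) in $\sqrt{\epsilon}$, I would shrink $\epsilon^{*}(n)$ so that $A\neq 0$ throughout $(0,\epsilon^{*}(n))$. By the Birkhoff normal form result and Moser's twist theorem this makes $z_0$ KAM stable for $B^{2n+2}$, completing the proof.
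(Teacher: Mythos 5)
Your proposal is correct in outline and rests on the same machinery as the paper (Birkhoff normal form via \eqref{birk} plus Moser's twist theorem, with the coefficients extracted from the explicit maps of Appendix A by computer algebra), but it takes a genuinely different route at one structural point: you apply the normal form theorem directly to $B^{2n+2}$, whereas the paper first exploits the reversing involution $\mathcal{R}(s,\theta)=(-s,\pi-\theta)$ of \eqref{involution} to write $B^{2n+2}=(\mathcal{R}\circ B^{n+1})^{2}$ and then applies the entire analysis (linearization, eigenvalues, third-order Taylor expansion, and \eqref{birk}) to the half-map $\mathcal{R}\circ B^{n+1}$, whose fixed point is again $z_{0}$. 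That reduction buys two things. First, it halves the length of the composition to be Taylor-expanded to third order, which matters for feasibility; you mention the reflection/time-reversal symmetry only as a consistency check, not as the device that shrinks the problem. Second, and more subtly, the half-map has trace near $-2$ (eigenvalues near $-1$, with the rotation offset $\sim\sqrt{\epsilon}$), so in \eqref{birk} the factor $\sin\mu/(\cos\mu-1)$ is small rather than divergent, and the nonresonance conditions $\lambda^{3},\lambda^{4}\neq 1$ are immediate near $-1$; in your direct approach the eigenvalues of $B^{2n+2}$ are near $+1$, the prefactor blows up like $\epsilon^{-1/2}$, and the competing divergences must be matched by hand in Puiseux expansions --- a difficulty you correctly identify and handle in principle, at the cost of a worse-conditioned computation. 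Your nonresonance check via \eqref{trb} ($0<\mu<\pi/2$ for small $\epsilon$) is sound, and your remark that unboundedness of $A$ is harmless is right; note only that your heuristic guess of a leading term of order $\epsilon^{-1/2}$ understates the actual divergence, since the paper's computation \eqref{Asize} gives $A=O(\epsilon^{-2})$ --- but as you never rely on the specific order, this does not affect the validity of your argument, and both your plan and the paper ultimately rest on the same final step of evaluating the leading coefficient (symbolically in $n$) and verifying it is nonzero for every $n\geq 3$.
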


\begin{proof}
We see that $z_{0}$ can be moved to the origin via  linear change of coordinates $Z = z-z_{0}$.  Let us define the \textit{Reflection map} $\mathcal{R}$

\begin{equation}
\mathcal{R}(s,\theta)=(-s,\pi-\theta)
\label{involution}
\end{equation} 
\

It is obvious that $\mathcal{R}$ is a diffeomorphism and $\mathcal{R}^{2}=\mbox{Id}$. Furthermore, it is a reversing involution for $B^{n+1}(z)$, since $\mathcal{R} \circ B=B\circ \mathcal{R}$. Observe that by composing $B_{out} \circ B_{in} \circ B_{D}(z_{0})$ with $\mathcal{R}(s,\theta)$, we obtain $z_{0}$. Thus $z_{0}$ is a fixed point of the map $\mathcal{R} \circ B^{n+1}$. Hence we have $B^{2n+2} = (\mathcal{R} \circ B^{n+1})^{2}$, and the stability of the fixed point of $\mathcal{R} \circ B^{n+1}$ corresponds to stability of the fixed point of $B^{2n+2}$. 

Let us check the properties of the linearized map $\mathcal{R} \circ B^{n+1}$. In particular, we are interested in the linear stability of $z_{0}$. Let us change coordinates $(s, \theta) \mapsto (s,r)\equiv (s, \cos \theta)$.  We remark that $\mathcal{R}$ in terms of $(s,r)$ is $\mathcal{R}(s,r)=(-s,-r)$, and thus indeed $\mathcal{R} \circ B^{n+1}$ is area-preserving. The tangent map is

\begin{equation}
D(\mathcal{R} \circ B^{n+1}) = \left( \begin{array}{cc}
\ \frac{\partial s}{\partial s_{0}}  & \frac{\partial s}{\partial r_{0}}\\ 
\frac{\partial r}{\partial s_{0}} & \frac{\partial r}{\partial r_{0}} \end{array} \right)
\label{linearized}
\end{equation}
\
The determinant of which is equal to $1$. For small enough $\epsilon$, the modulus of the trace of (\ref{linearized}) evaluated at $z_{0}$ is $ \left|-2 -2\epsilon \left(\frac{n \cos \frac{\pi}{n}}{\sin \frac{\pi}{n} \sin^{2} \frac{\pi}{n}} \left( n(\cos \frac{\pi}{n} -1) + \sin \frac{\pi}{n} \right) \right) + O(\epsilon^{2}) \right| < 2$ and hence $z_{0}$ is linearly stable. Let the eigenvalues of (\ref{linearized}) be $\lambda_{\pm}=\exp(\pm i \mu)= u \pm \mbox{i}v$. Then

$$\lambda_{\pm}= \frac{\frac{\partial s}{\partial s_{0}} + \frac{\partial r}{\partial r_{0}} \pm i\sqrt{4-\left(\frac{\partial s}{\partial s_{0}} + \frac{\partial r}{\partial r_{0}}\right)^{2}}}{2}$$
\
Which gives, since $\mu = \arctan \frac{v}{u}$,

\begin{eqnarray}
\mu = && -\frac{\sqrt{2 \epsilon}}{\sin \frac{\pi}{n}}\sqrt{n \left( n \sin \frac{2\pi}{n} - (1+2 \cos \frac{\pi}{n} + \cos \frac{2\pi}{n})\right)}\nonumber\\
&& + O(\epsilon^{3/2})
\end{eqnarray}
\
and $\mu$ tends to $0$ as $\epsilon$ tends to $0$, thus signifying in the limit $\epsilon=0$ we have the parabolic stability corresponding to $\gamma_{a}$ orbit, as expected. 

Using the explicit form of $B_{in}$, $B_{out}$ and $B_{D}$ given in the appendix A, eqns. ~\eqref{eqn:bdd},~\eqref{eqn:bin}, ~\eqref{eqn:bout} and appendix B for partial derivatives, we are able to find higher terms in Taylor expansion $R \circ B^{n+1}$. By plugging the resulting expressions in Mathematica and expanding for small $\epsilon>0$ and fixed $n \geq 3 $, we find the Birkhoff coefficient $A$ (\ref{birk}), is, to leading order:

\begin{widetext}
\begin{equation}
A(n,\epsilon) = \frac{5 \cosec^{5} \left(\frac{\pi}{2n}\right) \sec^{3} \left(\frac{\pi}{2n}\right)\sin^{5/2} \left(\frac{\pi}{n}\right) \left(\cos \left(\frac{\pi}{2n}\right) - n\sin \left(\frac{\pi}{2n}\right) \right)^{2}}{192 n^{2} \epsilon^{2} \sqrt{n\sin \left(\frac{2\pi}{n}\right) -\left(1 + 2 \cos \left(\frac{\pi}{n}\right) + \cos \left(\frac{2\pi}{n}\right) \right)   }} \sqrt{\frac{\sin \left(\frac{2\pi}{n}\right)}{ n\sin \left(\frac{\pi}{n}\right)-(1+ \cos \left(\frac{\pi}{n}\right)) }} + O(\epsilon^{-1})
\label{Asize}
\end{equation}
\end{widetext}
\

It is obvious that $A(1,\epsilon)$ and $A(2,\epsilon)$ is undefined. Numerically we see that $A(n,\epsilon) \neq 0$ for all $n \geq 3$. Hence $z_{0}$ is KAM stable.
\end{proof}

\begin{proof}[Proof of Theorem~\ref{theorem2}]
The KAM stability of $\gamma_{b}$ immediately follows from Proposition~\ref{prop3}. The existence of an open interval in $\epsilon$ for which the table $Q(R_{b})_{n, \epsilon}$ has an elliptic island follows from the form of Birkhoff coefficient (\ref{Asize}), as it is clear that by changing $\epsilon$ slightly, $A(n, \epsilon)$ stays nonzero. It is obvious that with increasing $n$, $R_{b}$ tends to $0$.
\end{proof}

Let us examine the coefficient in the limit $n \rightarrow \infty$. Expanding (\ref{Asize}) in Taylor series for $n \rightarrow \infty$ gives

\begin{equation}
A = \frac{5}{24 \epsilon^{2}}\left( \frac{\pi-2}{\pi^{2}} + \frac{\pi-1}{6n^{2}}\right) + O\left(\frac{1}{n^{4} \epsilon^{2}}\right)
\label{Alarge}
\end{equation}
\
i.e. a non-vanishing function of $n$ and $\epsilon$ that grows unboundedly as $\epsilon$ tends to $0$. 

Let us fix $\epsilon$ and plot $\tilde{A} = \lim_{\epsilon \to 0} \epsilon^{2}A$, ignoring terms of $O(\epsilon^{-1})$ as a function of $n$ for $3 \leq n \leq 1000$) with logarithmic scale for $n$ (see Fig.~\ref{fig5}). For $n=3$ we have $\tilde{A} = -\frac{5(-2+ \sqrt{3)}}{54(-1+ \sqrt{3})} =  \simeq 0.0338912$. As $n \to 1000$, $\tilde{A} \to 0.024 \simeq \frac{5(\pi-2)}{24 \pi^{2}}$, as expected from (\ref{Alarge}). These calculations imply that $A$ is, to leading order, $O(\epsilon^{-2})$ and thus tends to infinity as $\epsilon$  decreases and the period $n$ increases. 

\begin{figure} [t]
\centering
\includegraphics[width=8 cm]{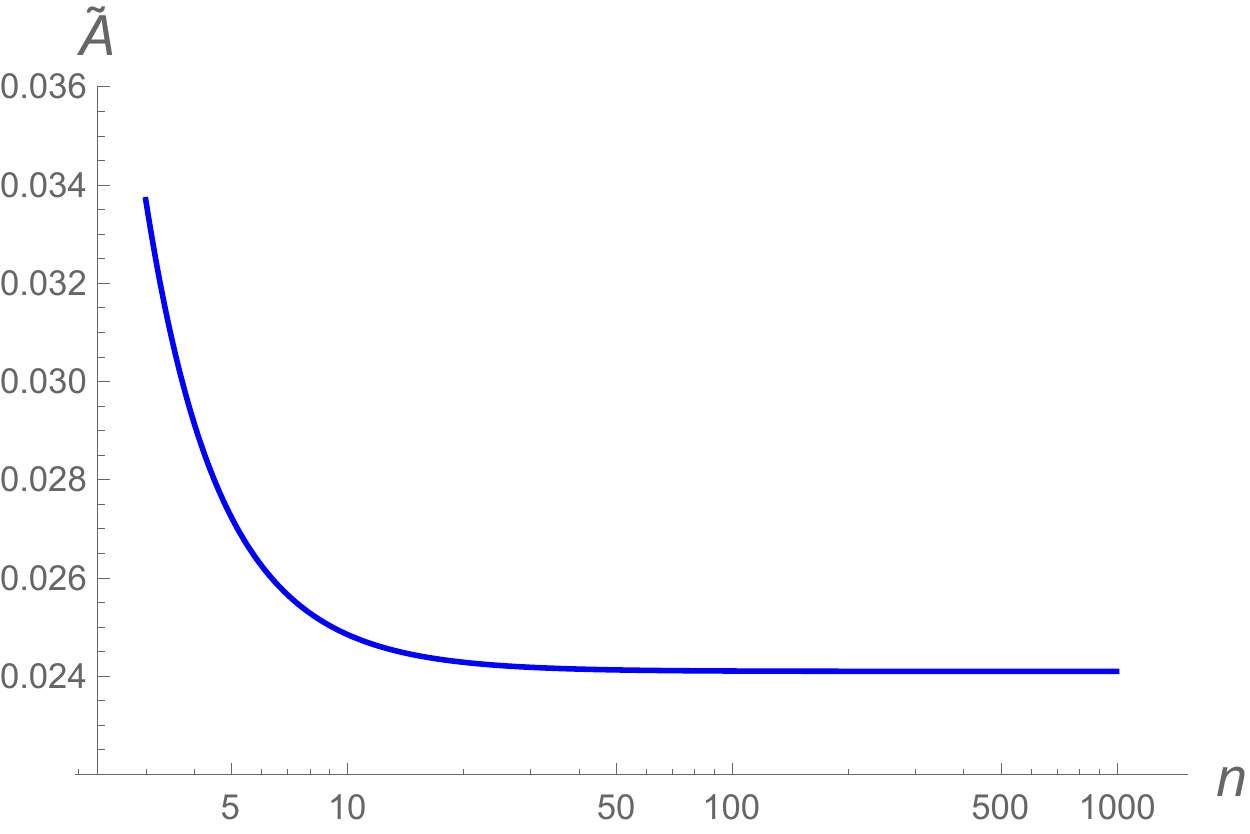}
\caption[]{A plot of Birkhoff coefficient for $\gamma_{b}$.  $\tilde{A} = \lim_{\epsilon \to 0} \epsilon^{2}A$ with $A$ as in eq. (\ref{Alarge})}
\label{fig5}
\end{figure}

\begin{remark} It is known from Grigo's thesis \cite{grigobilliards} that for certain small local perturbations of the scatterer boundary in the normal direction the elliptic periodic orbit will survive and remain nonlinearly stable. \end{remark}

\begin{remark}Constraining the radius $R = R_{b}$ given by (\ref{radiusb}) enabled us to make explicit computations with Birkhoff normal form in terms of $n$ and $\epsilon$ only. Setting $R < R_{b}$ while $n$ and $\epsilon$ are fixed will correspond to a nontangential position of $D_{R}$ to $D$. Similar computations to the ones in Section~\ref{sec:level5} show that the corresponding orbit will be linearly stable for $ \frac{sin \frac{\pi}{n}}{n} \lesssim R < R_{b}$. We expect that Proposition~\ref{prop3} will also hold for such $R$, however the Birkhoff coefficient will depend on $R$ and as such, relevant computations would be more laborious.  Therefore, we believe that there exists a sequence of non ergodic billiard tables with the scatterers of radius $\frac{sin \frac{\pi}{n}}{n} \lesssim R \leq R_{b}$ for every $n$.  \end{remark}

\begin{remark} We believe that analogous computations could be used to verify KAM stability of the $\gamma_{a,k}$ orbits for $\delta \neq 0$ corresponding to billiard tables where the scatterer is not tangent to the boundary. However we expect that the computations would be much more lengthy since $\delta \neq 0$ implies a loss of symmetry for the billiard orbit and as such the reduced billiard map $\mathcal{R} \circ B^{n+1}$ may not be utilised. \end{remark}

\section{\label{sec:level7}Summary and Conclusions}

We have studied the stability properties of some periodic orbits in a certain case of annular billiard, where the radius of the scatterer is very small compared to the external boundary. We also have considered a special limit when the scatterer is just tangent to the outer boundary, forming a cusp. This situation has thus far received relatively little attention, with no published rigorous results concerning the billiard dynamics in the regions formed by such cusps. The advantage of circular boundaries is that they allow one to obtain explicit formulae for the billiard map and perform perform direct computations to study linear and nonlinear stability of periodic orbits. We have established that given any arbitrary $n \geq 3$, the resulting $(2n+2)$-periodic orbit may be made linearly stable for an appropriate choice of scatterer radius and small displacement. Further, we have shown that for the cusp geometry, orbits with $\pi$-rational reflection angles are neutrally stable. We have found via the application of KAM theory that for the cusp geometry orbits with non $\pi$-rational angles can be nonlinearly stable. We have also found a neutrally stable configuration of $\gamma_{a}$ for a specific value of $R$ for non-symmetric scatterer position for a given $\delta \neq 0$, that corresponds to a direct parabolic bifurcation.
 
We note that the circular boundaries significantly simplified our investigation, and the straight forward application of KAM theory is unlikely to be feasible for other convex billiards with small tangential scatterers. Lazutkin's theorem \cite{lazutkin1973existence} implies non-ergodicity of strictly convex billiards.  One might wonder whether this property of a strictly convex billiard other than a circle will be maintained when a small tangential scatterer is introduced. This general problem seems much more difficult due to the curvature of the boundary no longer being constant.

We hope that this work will serve as a motivation for future investigation into billiards with cusps formed by a dispersing and a focusing arc.  There is a brief numerical investigation in \cite{da2015dynamics} into the scaling of the number of collisions for excursions into such a cusp, but as yet no published rigorous results.

\begin{acknowledgments}
The authors would like to thank Peter Balint, George Fullman, Kyle Guan and Dmitry Turaev for useful discussions. Carl Dettmann's research is supported by EPSRC grant EP/N002458/1. Vitaly Fain's research is supported by University of Bristol Science Faculty Studentship grant.
\end{acknowledgments}

\appendix

\section{Derivation of $B_{in}$ and $B_{out}$}

We give some details leading up to the expressions for $B_{out}$, $B_{in}$ and $B_{D}$ that were used in the computation of (\ref{birk}). We remark that similar formulae for the annular billiard map have been derived previously \cite{gouesbet2001periodic, saito1982numerical}. However we choose to derive the formulae used in our work since we are focusing on a very specific type of a billiard table with the scatterer tangential to the unit disk. The relevant sketch of the billiard geometry is in Fig.~\ref{fig4}.  We align $D_{R}$ and $D$ such that their centres fall on the horizontal axis $y=0$, and we position $D_{R}$ such that $\partial D_{R}$ is tangent to $\partial D$  since we study type (b) orbits. Let us parametrise $\partial D$ by $\varphi$ as 

$$\partial D = \partial D(\varphi) =  \{ (\cos \varphi, \sin \varphi): \quad \varphi \in (-\pi,\pi) \} $$ 
\\
with $\partial D$ traversed anticlockwise. Let us parametrise $\partial D_{R}$  by $\gamma$ where $\gamma$ is as in Fig. 4:

$$\partial D_{R} = \partial D_{R}(\gamma) = \{ (R \cos \gamma - (1-R), R\sin \gamma): \gamma \in (0,2\pi)  \}$$
Let us measure the arc length parameter on $\partial D_{R}$ clockwise, from the point of tangency of $\partial D_{R}$ to $\partial D$. The arc length $s$ in terms of $\gamma$ for $\partial D_{R}$  is thus

\begin{equation}
s = \pi + R(\pi - \gamma)
\label{arc2}
\end{equation}

Let us define $\phi = \pi/2 - \theta$, where $\phi \in [-\pi/2, \pi/2] $ is the angle made between the velocity vector of the particle at $\partial D_{R}$ and the normal $\mathbf{n}$,  chosen to be +ve in a clockwise direction, and $\theta \in [0,\pi]$ is the usual angle of reflection made with the +ve tangent vector $\mathbf{t}$ to $\partial D_{R}$. Also define $\alpha = \varphi + \pi/2$ as in Fig.~\ref{fig4}. 

Thus we have the billiard map as follows. For $(n-1)$ bounces along the circle, $B_{D}$ is obtained from (\ref{bd}), i.e.

\begin{align}
\label{eqn:bdd}
\begin{split}
 s_{n-1} &= s_{0} + 2(n-1)\theta_{0}
\\
\theta_{n-1} &= \theta_{0} .
\end{split}
\end{align}

Now from Fig.~\ref{fig4}. we have $\gamma_{n} = -\pi + \alpha_{n-1} + \theta_{n-1} - \phi_{n+1}$ and using this and (\ref{arc2}), we obtain $B_{in}: (s_{n-1}, \theta_{n-1}) \mapsto (s_{n}, \theta_{n})$:

\begin{align}
\label{eqn:bin}
\begin{split}
 s_{n} &= R(2\pi - (\theta_{n} + \theta_{n-1} + s_{n-1}) + \pi ,
\\
\theta_{n} &= \arccos \left( \frac{-\cos \theta_{n-1} - (1-R)\cos (\theta_{n-1} + s_{n-1})}{R}\right) .
\end{split}
\end{align}

Where subscript $n$ denotes the $n$-th impact which is on $\partial D_{R}$, and $(n-1)$-th impact is on $\partial D$, as above. Likewise  the map $B_{out}$ from scatterer to circle is obtained by reversing time, $B_{out}: (s_{n}, \theta_{n}) \mapsto (s_{n+1}, \theta_{n+1})$:

\begin{align}
\label{eqn:bout}
\begin{split}
 s_{n+1} &= \theta_{n} + \theta_{n+1} - \frac{s_{n}-\pi}{R}
\\
\theta_{n+1} &= \arccos \left( -R\cos \theta_{n} -(1-R)\cos \left(\theta_{n} - \frac{s_{n}-\pi}{R}\right)\right)
\end{split}
\end{align}

Now for $\gamma_{b}$ orbits (with $\theta_{0} = \pi/n + \epsilon$), we need the particle to collide with $\partial D_{R}$ perpendicularly, which can be achieved by choosing the initial position on the boundary $\partial D$ to be $s_{0} = -\pi + \pi/n + \epsilon (1-n)$ which corresponds to $\theta_{n-1} = \theta_{0} = \pi/n + \epsilon$, giving $z_{0}$. It can be checked (by implicit differentiation and calculating the Jacobian) that the above maps satisfy the symplecticity condition, if we convert to the coordinates $(s,r) \equiv (s,\cos \theta)$.

\section{Derivatives of $ \mathcal{R} \circ B^{n+1}$}

Let us compute, by the chain rule, the partial derivatives of $\mathcal{R} \circ B^{n+1}$ in coordinates $(s, r)$, that we use in Section~\ref{sec:level6} for computation of (\ref{birk}). We have $\mathcal{R} \circ B^{n+1}(s_{0}, r_{0}) = (s_{n+2}, r_{n+2})$.

To reduce typographical clutter, we write $s=s_{n+2}, \quad r=r_{n+2}, \quad \theta = \theta_{n+2}$. We do not require chain rule to compute $\frac{\partial s}{\partial s_{0}},\frac{\partial^{2} s}{\partial s^{2}_{0}}$ and $\frac{\partial^{3} s}{\partial s^{3}_{0}}$. The other derivatives are:

\begin{widetext}

\begin{equation}
\frac{\partial s}{\partial r_{0}} = -\frac{1}{\sin \theta_{0}} \frac{\partial s}{\partial \theta_{0}}; \qquad \frac{\partial^{2} s}{\partial s_{0} \partial r_{0}} = -\frac{1}{\sin \theta_{0}} \frac{\partial^{2} s}{\partial s_{0} \partial \theta_{0}}
\end{equation}
\
\begin{equation}
\frac{\partial^{2} s}{\partial r^{2}_{0}} = \frac{1}{\sin^{2} \theta_{0}} \left( \frac{\partial^{2} s}{\partial \theta^{2}_{0}} -\frac{\cos \theta_{0}}{\sin \theta_{0}} \frac{\partial s}{\partial \theta_{0}} \right); \qquad \frac{\partial^{3} s}{\partial s^{2}_{0} \partial r_{0}} = -\frac{1}{\sin \theta_{0}}\frac{\partial^{3} s}{\partial s^{2}_{0} \partial \theta_{0}}
\end{equation}
\
\begin{equation}
\frac{\partial^{3} s}{\partial r^{3}_{0}} = -\left(\frac{1}{\sin^{3} \theta_{0}} + \frac{3 \cos^{2} \theta_{0}}{\sin^{5} \theta_{0}}\right)\frac{\partial s}{\partial \theta_{0}} + \frac{3 \cos \theta_{0}}{\sin^{4} \theta_{0}}\frac{\partial^{2} s}{\partial \theta^{2}_{0}} - \frac{1}{\sin^{3}\theta_{0}}\frac{\partial^{3} s}{\partial \theta^{3}_{0}}; \qquad \frac{\partial^{3} s}{\partial r^{2}_{0} \partial s_{0}} = \frac{1}{\sin^{2} \theta_{0}} \left(\frac{\partial^{3} s}{\partial \theta^{2}_{0} \partial s_{0}} - \frac{\cos \theta_{0}}{\sin \theta_{0}} \frac{\partial^{2} s_{0}}{\partial s_{0} \partial \theta_{0}} \right)
\end{equation}
\
\begin{equation}
\frac{\partial r}{\partial s_{0}} = -\sin \theta_{0} \frac{\partial \theta}{\partial s_{0}}; \qquad \frac{\partial r}{\partial r_{0}} = \frac{\sin \theta}{\sin \theta_{0}}\frac{\partial \theta}{\partial \theta_{0}}
\end{equation}
\
\begin{equation}
\frac{\partial^{2} r}{\partial s^{2}_{0}} = -\cos \theta \left(\frac{\partial \theta}{\partial s_{0}}\right)^{2} - \sin \theta \frac{\partial^{2} \theta}{\partial s^{2}_{0}}; \qquad \frac{\partial^{2} r}{\partial r^{2}_{0}} = -\frac{\cos \theta}{\sin^{2} \theta_{0}}\left(\frac{\partial \theta}{\partial \theta_{0}}\right)^{2} + \frac{\cos \theta_{0} \sin \theta}{\sin^{3} \theta_{0}}\frac{\partial \theta}{\partial \theta_{0}} - \frac{\sin \theta}{\sin^{2} \theta_{0}}\frac{\partial^{2} \theta }{\partial \theta^{2}_{0}}
\end{equation}

\begin{equation}
\frac{\partial^{2} r}{\partial s_{0} \partial r_{0}} = \frac{\cos \theta}{\sin \theta_{0}}\frac{\partial \theta}{\partial s_{0}} \frac{\partial \theta}{\partial \theta_{0}} + \frac{\sin \theta}{\sin \theta_{0}}\frac{\partial^{2} \theta}{\partial s_{0} \partial \theta_{0}} \qquad \frac{\partial^{3} r}{\partial s^{3}_{0}} = \sin \theta \left(\frac{\partial \theta}{\partial s_{0}}\right)^{3} - 3\cos \theta \frac{\partial \theta}{\partial s_{0}}\frac{\partial^{2} \theta}{\partial s^{2}_{0}} - \sin \theta \frac{\partial^{3} \theta}{\partial s_{0}^{3}}
\end{equation}
\
\begin{equation}
\frac{\partial^{3} r}{\partial s^{2}_{0} \partial r_{0}} = \frac{\sin \theta}{\sin \theta_{0}}\left(\frac{\partial^{3} \theta}{\partial s^{2}_{0} \partial \theta_{0}} - \left(\frac{\partial \theta}{\partial s_{0}}\right)^{2}\frac{\partial \theta}{\partial \theta_{0}}\right) + \frac{\cos \theta}{\sin \theta_{0}}\left(\frac{\partial^{2} \theta}{\partial s^{2}_{0}}\frac{\partial \theta}{\partial \theta_{0}} + 2 \frac{\partial \theta}{\partial s_{0}}\frac{\partial^{2} \theta}{\partial s_{0} \partial \theta_{0}}\right)
\end{equation}

\begin{multline}
\frac{\partial^{3} r}{\partial r^{2}_{0} \partial s_{0}}=\frac{-\sin \theta}{\sin^{2} \theta_{0}}\left(\frac{\partial^{3} \theta}{\partial \theta^{2}_{0} \partial s_{0}} - \left(\frac{\partial \theta}{\partial \theta_{0}}\right)^{2} \frac{\partial \theta}{\partial s_{0}}\right) + \frac{\cos \theta_{0}}{\sin^{3} \theta_{0}}\left(\sin \theta \frac{\partial^{2} \theta}{\partial s_{0} \partial \theta_{0}} + \cos \theta \frac{\partial \theta}{\partial s_{0}} \frac{\partial \theta}{ \partial \theta_{0}}\right) \\
 - \frac{\cos \theta}{\sin^{2} \theta_{0}}\left(2 \frac{\partial^{2} \theta}{\partial s_{0} \partial \theta_{0}}\frac{\partial \theta}{\partial \theta_{0}} + \frac{\partial \theta}{\partial s_{0}} \frac{\partial^{2} \theta}{\partial \theta^{2}_{0}} \right) 
\end{multline}
\
\begin{multline}
\frac{\partial^{3} r}{\partial r^{3}_{0}} = \frac{\sin \theta}{\sin^{3} \theta_{0}}\left(-\left( \frac{\partial \theta}{\partial \theta_{0}}\right)^{3} + \frac{\partial^{3} \theta }{\partial \theta^{3}_{0}}\right) - \frac{3\cos \theta_{0}}{\sin^{4} \theta_{0}} \left(\cos \theta\left(\frac{\partial \theta}{\partial \theta_{0}}\right)^{2} + \sin \theta \frac{\partial^{2} \theta}{\partial \theta^{2}_{0}}\right) + \frac{3\cos \theta}{\sin ^{3} \theta_{0}}\frac{\partial^{2} \theta}{\partial \theta^{2}_{0}} \frac{\partial \theta}{\partial \theta_{0}} \\
+ \frac{\sin \theta}{\sin^{4} \theta_{0}}\frac{\partial \theta}{\partial \theta_{0}}\left(\sin \theta_{0} + \frac{3\cos^{2} \theta_{0}}{\sin \theta_{0}}\right)
\end{multline}

\end{widetext}

\section{Auxiliary Lemma for Proposition~\ref{prop1}}

\begin{lemma}
The function $f(x) = \frac{x}{\pi} \arctan \left(2x \sin^{2} \frac{\pi}{x} \right)$ is strictly increasing on $(1, \infty)$, and $\lim_{x \to \infty} f(x) = 2\pi$.
\label{lem}
\end{lemma}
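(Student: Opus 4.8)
The plan is to treat the two assertions separately: the limit by a routine asymptotic expansion, and the strict monotonicity by reducing the inequality $f'(x)>0$ to two elementary one-variable estimates. Throughout I write $g(x)=2x\sin^{2}(\pi/x)$, so that $f(x)=\frac{x}{\pi}\arctan g(x)$, and I record that $g(x)>0$ for every $x>1$: indeed $\pi/x\in(0,\pi)$ forces $\sin(\pi/x)\neq 0$, so $\arctan g$ is always the principal value in $(0,\pi/2)$.

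For the limit I would set $t=1/x\to 0^{+}$ and expand $\sin^{2}(\pi t)=\pi^{2}t^{2}-\tfrac13\pi^{4}t^{4}+\cdots$, giving $g=\tfrac{2}{t}\sin^{2}(\pi t)=2\pi^{2}t+O(t^{3})$ and hence $\arctan g=2\pi^{2}t+O(t^{3})$. Dividing by $\pi t$ yields $f(x)=2\pi+O(t^{2})$, so $\lim_{x\to\infty}f(x)=2\pi$; tracking the sign of the $O(t^{2})$ term even shows that $f$ approaches $2\pi$ from below, which is consistent with the monotonicity claimed.

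The substantive part is monotonicity. Differentiating gives $\pi f'(x)=\arctan g+\frac{x\,g'(x)}{1+g^{2}}$. The apparent difficulty is that $g$ is not itself monotone: writing $u=\pi/x$ one finds $g'(x)=2\sin^{2}u-2u\sin 2u$, which is negative for small $u$ (large $x$), so the second term is negative and in fact cancels $\arctan g$ to leading order as $x\to\infty$. I would circumvent this by the algebraic identity $x\,g'(x)=g-2\pi\sin 2u$ (obtained from $x=\pi/u$ and $g=\tfrac{2\pi\sin^{2}u}{u}$), which recasts the derivative as
\[
\pi f'(x)=\arctan g+\frac{g-2\pi\sin 2u}{1+g^{2}}.
\]
Applying the elementary bound $\arctan g>\frac{g}{1+g^{2}}$ for $g>0$ (the difference vanishes at $g=0$ and has positive derivative $\frac{2g^{2}}{(1+g^{2})^{2}}$) then gives
\[
\pi f'(x)>\frac{2g-2\pi\sin 2u}{1+g^{2}}=\frac{2\,(g-\pi\sin 2u)}{1+g^{2}}.
\]

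It remains only to verify $g-\pi\sin 2u>0$ on the relevant range. Since $g=\frac{2\pi\sin^{2}u}{u}$ and $\pi\sin 2u=2\pi\sin u\cos u$, I factor $g-\pi\sin 2u=\frac{2\pi\sin u}{u}\,(\sin u-u\cos u)$. For $x\in(1,\infty)$ we have $u=\pi/x\in(0,\pi)$, where $\sin u>0$; and $\sin u-u\cos u>0$ there because it vanishes at $u=0$ and has derivative $u\sin u>0$ on $(0,\pi)$. Hence $g-\pi\sin 2u>0$ and therefore $f'(x)>0$ on all of $(1,\infty)$, proving strict monotonicity. I expect the one genuinely delicate point to be recognising that the leading-order cancellation in $\pi f'$ is dissolved by the clean identity $x\,g'=g-2\pi\sin 2u$ combined with the sharp lower bound for $\arctan$; once these are in hand, the remaining inequalities are entirely standard.
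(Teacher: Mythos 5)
Your proof is correct and takes essentially the same route as the paper: your key bound $\arctan g > \frac{g}{1+g^{2}}$ is exactly the paper's auxiliary inequality $(1+y^{2})\arctan y - y > 0$ (proved there via $g'(y)=2y\arctan y$), and your factorization $g-\pi\sin 2u = \frac{2\pi\sin u}{u}\left(\sin u - u\cos u\right) > 0$ is precisely the paper's step $4x\sin^{2}\frac{\pi}{x} > 4\pi\sin\frac{\pi}{x}\cos\frac{\pi}{x}$. If anything your write-up is slightly more careful, since you justify that last inequality explicitly on all of $u\in(0,\pi)$ and verify the limit by an expansion, both of which the paper leaves terse.
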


\begin{proof}
Let us show that $f(x)$ is strictly increasing on $(1, \infty)$. Let us define the auxiliary function $$g(y) = (1+y^{2}) \arctan y - y.$$ Since $g(0)=0$ and $g'(y)= 2y \arctan y >0$ for $y>0$, $g(y)$ is strictly increasing and positive on $(0, \infty)$. Let $$y = 2x \sin^{2} \frac{\pi}{x}, \quad x>1.$$ Then rewriting $g$, we have

$$(1+4x^{2} \sin^{4} \frac{\pi}{x}) \arctan (2x \sin^{2} \frac{\pi}{x}) - 2x \sin^{2} \frac{\pi}{x} >0,$$
\
thus 

\begin{multline}
(1+4x^{2} \sin^{4} \frac{\pi}{x}) \arctan (2x \sin^{2} \frac{\pi}{x}) + 2x \sin^{2} \frac{\pi}{x} > 4x \sin^{2} \frac{\pi}{x} \\
> 4 \pi \sin \frac{\pi}{x} \cos \frac{\pi}{x}
\end{multline}
\
Now observe that

$$f'(x)= \frac{\arctan(2x \sin^{2} \frac{\pi}{x})}{\pi} + \frac{2x \sin^{2} \frac{\pi}{x} - 4 \pi \sin \frac{\pi}{x} \cos \frac{\pi}{x}}{\pi(1+4x^{2} \sin^{4} \frac{\pi}{x})} >0$$
\
by the above. Hence $f(x)$ is strictly increasing on $(1, \infty)$.

Since $\arctan x <x$ and $\sin x < x$ for $x>0$, we have that $f(x)$ is bounded above by $2\pi$ and $\lim_{x \to \infty} f(x) = 2\pi$.
\end{proof}

%\nocite{*}
%
% Produces the bibliography via BibTeX.

\end{document}